%% file: compactG.tex
\documentclass{article}
\usepackage{appendix}
\usepackage{amsmath, amsfonts, amssymb, amsthm, graphicx} 
\usepackage{color,xcolor}
\usepackage{caption}
\usepackage[bookmarks]{hyperref}
\usepackage{hyperref}
\hypersetup{
    colorlinks=true,   	
    linkcolor=red,      
    citecolor = [rgb]{0 0.7 0},   	
    filecolor=magenta, 	
    urlcolor=blue
}

\input{paper_defns.tex}

\newcommand{\dstar}{d^{*}_{\mathrm{R}}}
\newcommand{\dR}{d_{\mathrm{R}}}
\newcommand{\IS}{\mathbb{I}_S(X)}
\newcommand{\kk}{\kappa}

\title{Entropy power inequalities in compact groups}

\author{Lampros Gavalakis%
	\thanks{Statistical Laboratory, DPMMS,
	University of Cambridge,
	Centre for Mathematical Sciences,
        Wilberforce Road,
	Cambridge CB3 0WB, U.K.
        Email: {\tt lg560@cam.ac.uk}.
	}
\and 
Ioannis Kontoyiannis%
\thanks{Statistical Laboratory, DPMMS,
	University of Cambridge,
	Centre for Mathematical Sciences,
        Wilberforce Road,
	Cambridge CB3 0WB, U.K.
        Email: {\tt yiannis@maths.cam.ac.uk}.
	}
\and 
Sharang M. Sriramu%
\thanks{School of Electrical and Computer Engineering,
Cornell University,
Ithaca, NY, USA.
Email: {\tt sms579@cornell.edu}.
}
\and 
Aaron Wagner
\thanks{School of Electrical and Computer Engineering,
Cornell University,
Ithaca, NY, USA.
Email: {\tt wagner@cornell.edu}.
}
}

\date{\today}

\begin{document}

\maketitle

\begingroup
\renewcommand\thefootnote{}
\footnotetext{L.G.\ and I.K.\ were supported in part
	by the EPSRC funded INFORMED-AI project EP/Y028732/1.}
\endgroup

\begin{abstract}
Suppose $X,Y$ are independent random variables with values 
in a compact abelian group $(G,+)$. We examine the following two
entropy power-type inequalities:
$h(X+Y)\geq \frac{1}{2}h(X)+\frac{1}{2}h(Y)$
and $h(X+Y)\geq \max\{h(X),h(Y)\}$,
where the entropy $h(Z)$ of a $G$-valued random variable $Z$ 
is defined in terms of its density with respect 
to Haar measure on $G$.
For groups that are either connected or finite with 
no nontrivial subgroups, 
we precisely characterize the cases
of equality 
and establish explicit,
quantitative stability 
estimates 
in terms of relative entropy
for these two inequalities.
The main tools are a generalization of an entropic inequality obtained 
by Green, Manners and Tao (2023) for discrete entropy,
and a harmonic-analytic estimate for the chi-squared 
contraction coefficient in connected compact groups. 
As an application, 
we derive exponential convergence rates to the uniform distribution 
in relative entropy for random walks on 
connected compact abelian groups.
\end{abstract}

\section{Introduction}
\subsection{Shannon's continuous Entropy Power Inequality in $\RL$}

One of the formulations 
of Shannon's Entropy Power Inequality (EPI) over the real 
numbers~\cite{shannon:48,stam:59,blachman:65}
states that, for any pair of independent
random variables $X,Y$ in $\mathbb{R}$,
\begin{equation} 
\label{EPIR}
h(X+Y) \geq \frac12 h(X) + \frac12h(Y) + \frac{1}{2}\log{2},
\end{equation}
with equality if and only if
$X$ and $Y$ are Gaussian with the same 
variance~\cite{EPI:25arxiv}.
Here, $h(X)$ denotes the differential entropy of $X$,
defined as
$$h(X):=-\int_{\RL} f(x)\log f(x)\,dx,$$
whenever the law of $X$ has a density $f$ with respect to Lebesgue 
measure and the integral exists, and $h(X)=-\infty$ otherwise.
[Throughout, `$\log$' denotes the natural logarithm.]

The question of stability in the EPI~\eqref{EPIR}
has been studied extensively, often in connection 
with convex geometry. Among numerous related results, 
Eldan and Mikulincer~\cite{eldan:20} identified appropriate
conditions under which
near-equality in~\eqref{EPIR} implies that
$X$ and $Y$ are close to being Gaussian.
Specifically, they obtained {\em quantitative} stability
bounds stating, e.g., that if $X,Y$ are log-concave with
unit variance then
\be
D(X\|Z)+D(Y\|Z)\leq
k \,C(X,Y)^3
\Big[h(X+Y)-\frac{1}{2}h(X)-\frac{1}{2}h(Y)-\frac{1}{2}\log 2\Big].
\label{eq:EM}
\ee
Here, $k$ is an explicit universal constant,
$C(X,Y)$ is the largest of the Poincar\'{e} constants
of $X$ and $Y$, 
$D(X\|Y)$ denotes the relative entropy
between the laws of two random variables $X,Y$,
and $Z$ is a standard normal.
Recall that, for any two probability measures $\mu,\nu$
on the same space, the relative entropy between
them is 
$$
D(\mu\|\nu):=
\begin{cases} \int \log\big(\frac{d\mu}{d\nu}\big)\,d\mu& \mbox{if}\;\mu\ll\nu,\\
+\infty & \mbox{otherwise}.
\end{cases}
$$
As we discuss below, 
statements like~\eqref{eq:EM} are also referred to as
{\em entropy jumps}
in the literature.

In the special case when $X,Y$ are i.i.d.,
Ball, Barthe and Naor~\cite{ball:03} and
Kontoyiannis and Madiman~\cite{KM:14} 
have obtained more general stability bounds,
assuming only that $X$ has
a finite Poincar{\'e} constant.
These were generalized to higher dimensions under the extra 
assumption that $X$ is log-concave by Ball and Nguyen~\cite{ball:12}, 
and to non-i.i.d.\ log-concave random vectors 
by Eldan and Mikulincer~\cite{eldan:20}. 
It is also known that stability in the sense of relative entropy may 
fail without assumptions~\cite{courtade:18}. For a more extensive
discussion of the relevant literature
see~\cite{EPI:25arxiv}, 
where a qualitative stability result
for the EPI~\eqref{EPIR} was also shown 
under much weaker conditions.

\subsection{Discrete EPIs in abelian groups}

Let $H(X)$ denote the usual (discrete) Shannon 
entropy of a discrete random variable $X$ taking values in some set $A$,
$$H(X):=-\sum_{x\in A}\BBP(X=x)\log\BBP(X=x).$$
As in the continuous case~\eqref{EPIR},
discrete EPIs generally provide 
lower bounds for $H(X+Y)$, when $X$ and $Y$ are 
independent discrete random variables
with values in some ambient group $G$.
A typical starting point is the elementary inequality
\be
H(X+Y)\geq H(X),
\label{eq:elem}
\ee
and sharper bounds are then established 
for specific groups $G$.

Indeed, various such discrete generalizations 
of~\eqref{EPIR} have been proposed.
A discrete analogue of the EPI in finite groups was obtained 
in~\cite{jog:14} through the point of view of minimizing 
the entropy of a sum under fixed entropies of the summands. 
This extends the binary result of~\cite{shamai:90}
that was based on what is widely known as Mrs.\ Gerber's 
Lemma~\cite{wyner:73I,wyner:73II}.
Another discrete approach 
gives EPI-like results for uniformly distributed
random variables with values in finite subsets
of $\IN$~\cite{madiman:21b}. More recently,
lower bounds for $H(X+Y)$ in connection with the
Cauchy-Davenport theorem were established in~\cite{gavalakis-goh:arxiv}.

One of the discrete EPI-like bounds closest in spirit to this work
is the following result of Tao~\cite{tao:10}: For i.i.d.\ discrete 
random variables 
$X,X'$ taking
values in a torsion-free group, the simple bound~\eqref{eq:elem} 
can be strengthened to
\be
H(X+X') \geq H(X) + \frac12\log{2} -o(1),
\label{eq:tao}
\ee
as $H(X)\to \infty$. 
For this bound,
a discrete analogue of the stability result~\eqref{eq:EM} 
was recently obtained in~\cite{EPI:25arxiv} for log-concave 
random variables using convexity arguments, where the discrete
Gaussian plays the role of the (asymptotic) extremizer in~(\ref{eq:tao}).

In a similar spirit, when $X$ and $Y$ are independent $\IN$-valued
random variables, Haghighatshoar, Abbe and Telatar~\cite{abbe:14} 
obtained a stability result for the elementary
bound
$$H(X+Y)\geq\frac{1}{2}H(X)+\frac{1}{2}H(Y),$$
showing that 
there exists a continuous function $g:\mathbb R_+^2 \to \mathbb R_+$ 
with $g(x) = 0$ if and only if $x=0$ such that 
\begin{equation}\label{HATeq}
    H(X+Y)-\frac12H(X)-\frac12H(Y) \geq g(H(X),H(Y)).
\end{equation}

\subsection{Main results: General EPIs on compact groups} 
\label{introresults}

In this work we consider sums of independent random variables
$X,Y$ with values in a compact group $G$, and we obtain 
sharp lower bounds
for the entropy of their sum $h(X+Y)$, where now the entropy
is defined in terms of Haar measure on $G$. 
To be precise, let $(G,+)$ be a compact and metrizable 
topological abelian group, and let $\mu$ denote the unique
Haar probability measure on $(G,\clG)$, where $\clG$ is the 
Borel $\sigma$-algebra.
The metrizability assumption makes $G$ a Polish space,
which guarantees the existence of regular conditional 
distributions and allows us to avoid measurability problems. 
When the law of a $G$-valued random variable $X$ has a density 
$f$ with respect to $\mu$,
we define its entropy as
\be
h(X):=-\int_G f(x)\log f(x)\,d\mu(x)
=-\int_G f(x)\log f(x)\,dx,
\label{eq:hHaar}
\ee
and set $h(X)=-\infty$ otherwise. Since $\mu$ is a probability 
measure, the integral in the definition always exists
and $h(X)\in[-\infty,0]$.
If $G$ is a discrete group, 
$h(X) \in [-\log{M(G)}, 0],$
where $M$ is the counting measure on $G$.
At the lower extreme,
$h(X)=-\log M(G)=\log\mu(\{0\})$ if and only if $X$ is deterministic,
and at the other end,
$h(X) = 0$ if and only if 
$X$ is uniform on the whole group $G$,
i.e., if $X$ has law $\mu$. 

For any two independent random variables in $G$, the basic inequality
\be
h(X+Y)\geq\frac{1}{2}h(X)+\frac{1}{2}h(Y),
\label{eq:EPIG}
\ee
follows easily from the stronger bound
\be
h(X+Y)\geq\max\big\{h(X),h(Y)\big\},
\label{eq:mEPIG}
\ee
which is a simple consequence of Jensen's inequality.
We refer to~\eqref{eq:EPIG} as ``the EPI on $G$''
and to~\eqref{eq:mEPIG} as ``the maximal EPI on $G$''.

After carefully characterizing the 
cases of equality in~\eqref{eq:EPIG}
and in~\eqref{eq:mEPIG},
the main goal of this work is
to establish explicit stability
results for these two EPIs;
see Theorems~\ref{stabilityTh} and~\ref{maxentropyepi}
below.

For the EPI~\eqref{eq:EPIG},
it is not very hard to see that there is equality if and only if $X,Y$ 
are both uniformly distributed on an open subgroup, at least under 
reasonably mild 
assumptions. If $G$ has proper open subgroups, then there is no 
hope of having stability results in relative entropy.
For example, take
\(G=\mathbb F_2^2\), \(H=\{(0,0),(1,1)\}\),
and let $X$ be uniform on $H$ with probability $1-\varepsilon$ and 
equal to $(0,1)$ with probability $\varepsilon$. The distribution of $X$ 
is not absolutely continuous with respect to the uniform on any proper 
subgroup and the relative entropy between $X$ and the uniform on 
the whole group is bounded away from $0$ for any $\varepsilon$, 
while clearly $h(X+X')-h(X) \to 0$ as $\varepsilon\to 0.$

Therefore, we will restrict attention to the case where $G$ has
no open subgroups except possibly $\{0\}$. In this setting, if $\{0\}$ 
is open, then $G$ is discrete and finite.
Otherwise, if $\{0\}$ is not 
open, then $G$ is connected; see Lemma~\ref{lemmaproper}
in Section~\ref{preliminariessec}.
The connected compact
abelian setting has been considered in the context of Kneser's 
theorem~\cite{tao:18},  as well as in the context of Riesz--Sobolev 
inequalities~\cite{christ:22}; see the discussion at the 
end of Section~\ref{preliminariessec}.

Our first result,
proved in Section~\ref{proof2sec},
precisely characterizes the equality case in
the EPI~\eqref{eq:EPIG}.

\begin{theorem}[EPI on compact groups] \label{equalityTh}
Let $G$ be a compact and metrizable topological abelian group, equipped with its Borel $\sigma$-algebra. Assume that $G$ has no open subgroups except $G$ itself and possibly $\{0\}$. Let $X,Y$ be independent random variables on $G$ with finite entropies. Then
$$h(X+Y)-\frac{1}{2}h(X)-\frac{1}{2}h(Y) \geq 0.$$
Moreover:
\begin{itemize}
\item[$(i)$]
If $G$ is connected, then there is equality if and only 
if $X$ and $Y$ are both uniform on $G$.
\item[$(ii)$]
If $G$ is finite, then there is equality if and only if 
either both $X$ and $Y$ are uniform on the whole group $G$ 
or both $X$ and $Y$ are deterministic.
\end{itemize}
\end{theorem}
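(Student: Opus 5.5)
The plan is to first prove the maximal EPI~\eqref{eq:mEPIG} and use it to understand the equality cases. Let $X,Y$ have densities $f,g$ with respect to $\mu$ (both exist, since the entropies are finite). Then $X+Y$ has density $f*g(z)=\int_G f(z-y)g(y)\,dy$. Conditioning on $Y$, we write $f*g=\E[f(\cdot-Y)]$. Since $t\mapsto -t\log t$ is concave and Haar measure is translation invariant, Jensen's inequality gives
\[
h(X+Y)\ \ge\ \E\bigl[h(X+Y)\bigm| Y\bigr]\ =\ h(X).
\]
Equivalently, $h(X+Y)-h(X)=I(X+Y;Y)\ge 0$, a mutual information that is finite here. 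By symmetry we also get the bound with $h(Y)$, so
\[
h(X+Y)\ \ge\ \max\{h(X),h(Y)\}\ \ge\ \tfrac12 h(X)+\tfrac12 h(Y),
\]
which is the inequality of the theorem.

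For the equality case, equality in the EPI~\eqref{eq:EPIG} forces $h(X)=h(Y)=h(X+Y)$, since the maximum dominates the average. In particular $I(X+Y;Y)=0$, so $X+Y$ is independent of $Y$. Hence for $P_Y$-a.e.\ $y$ the translate $f(\cdot-y)$ equals $f*g$ almost everywhere. Write $p=f*g$ and let $S$ be the support of $P_Y$. By continuity of translation in $L^1(G)$, the set $\{y: f(\cdot-y)=p \text{ a.e.}\}$ is closed, so it contains $S$. Consequently $f$ is invariant, up to null sets, under translations by the closed subgroup $K$ generated by $S-S$. The symmetric argument, using $I(X+Y;X)=0$, makes $g$ invariant under the closed subgroup generated by the support of $P_X$ minus itself.

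The key step, and the main obstacle, is to upgrade this invariance to the two alternatives in the theorem. Since the entropies are finite, $P_X$ and $P_Y$ are absolutely continuous with respect to $\mu$, so their supports have positive Haar measure. If $\mu(\{0\})=0$ (the non-discrete case), then by Steinhaus's theorem (valid in compact groups) $S-S$ contains an open neighbourhood of $0$. Therefore $K$ is an open subgroup, which by hypothesis must be $G$. So $f$ is a.e.\ invariant under all translations, hence $f\equiv 1$, and likewise $g\equiv 1$. By Lemma~\ref{lemmaproper}, non-discreteness is exactly the connected case, which gives (i).

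In the finite case, $K$ is a subgroup, so $K=G$ or $K=\{0\}$. If $K=\{0\}$ then $S$ is a single point, so $Y$ is deterministic. Then $h(X+Y)=h(X)$, and equality forces $h(X)=h(Y)$, so $X$ is deterministic as well. Otherwise $K=G$ and $f$ is constant, so $X$ is uniform. Then $X+Y$ is uniform and $h(X+Y)=0$, and equality forces $h(Y)=0$, i.e.\ $Y$ is uniform. The converse directions are immediate, giving (ii).
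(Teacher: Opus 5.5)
Your proof is correct, but it reaches the equality case by a different route than the paper. You use the chain $h(X+Y)\ge\max\{h(X),h(Y)\}\ge\frac12h(X)+\frac12h(Y)$. Equality collapses it to $h(X)=h(Y)=h(X+Y)$, hence $I(X+Y;Y)=I(X+Y;X)=0$, and this gives you a \emph{cross}-invariance: $f_X$ is invariant under the closed subgroup generated by $\mathrm{supp}(Y)-\mathrm{supp}(Y)$, and $f_Y$ under the one generated by $\mathrm{supp}(X)-\mathrm{supp}(X)$. Steinhaus and the subgroup hypothesis then finish the argument. The paper instead invokes its Ruzsa triangle inequality (Proposition~\ref{startriangle}) to get $\dR(X,X)\le 2\dR(X,-Y)=0$, i.e.\ $I(X';X-X')=0$ for an independent copy $X'$. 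From this it deduces a \emph{self}-invariance: $X$ is uniform on a coset of the subgroup $\mathrm{supp}(X)-\mathrm{supp}(X)$, which is open by the Steinhaus-type result, and likewise for $Y$. Your version is more elementary and self-contained, since it bypasses the triangle inequality, whose proof needs the smoothing approximation and Lemma~\ref{lemmacontinuity}. Your use of continuity of translation in $L^1(G)$ also rigorously upgrades ``for $P_Y$-a.e.\ $y$'' to ``for every $y$ in the support''; the paper passes over this step when it writes $X-x\stackrel{d}{=}X-x'$ for all $x,x'\in\mathrm{supp}(X)$. In exchange, the paper's version isolates a structural fact in Ruzsa-distance language: $\dR(X,X)=0$ forces $X$ to be uniform on a coset of an open subgroup, in any compact metrizable abelian group and with no hypothesis on its subgroups. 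That fact fits the $\dR$/Kneser framework used for the stability results that follow.
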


Our next result is a stability estimate for Theorem~\ref{equalityTh}. 

\begin{theorem}[Stability of EPI on compact groups] 
\label{stabilityTh}
Let $G$ be a compact and metrizable topological abelian group, equipped 
with its Borel $\sigma$-algebra. Assume that $G$ has no open subgroups 
except $G$ itself and possibly $\{0\}$. Let $X,Y$ be independent random 
variables on $G$ with finite entropies, and write:
$$\delta_{\mathrm{EPI}}:=h(X+Y)-\frac{1}{2}h(X)-\frac{1}{2}h(Y).$$
There are absolute constants $C_1, C_2,C_3 > 0$ such that the following hold:
   
   If $G$ is connected and $\delta_{\mathrm{EPI}} \leq C_1,$ then
        \begin{equation} \label{eq:stability}
        \max\big\{D(X\|U_{G}), D(Y\|U_{G})\big\}
\leq C_2  \sqrt{\delta_{\mathrm{EPI}}}
	\log\Big(\frac{1}{\delta_{\mathrm{EPI}}}\Big),
        \end{equation}
        where $U_{G}$ is the uniform on the whole group $G$. 

        If $G$ is finite and $\delta_{\mathrm{EPI}} \leq C_1,$ then 
either~\eqref{eq:stability} holds or 
        \begin{equation*}
        \log \mu(\{0\})\leq \min\{h(X),h(Y)\} \leq \max\{h(X),h(Y)\} \leq \log \mu(\{0\}) + C_3  \sqrt{\delta_{\mathrm{EPI}}}
\log\Big(\frac{1}{\delta_{\mathrm{EPI}}}\Big).
         \end{equation*}
\end{theorem}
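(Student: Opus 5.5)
The plan is to reduce the statement to two one-sided entropy increments and then apply a structural (inverse) theorem to each. Since $\delta_{\mathrm{EPI}}=\tfrac12\big(h(X+Y)-h(X)\big)+\tfrac12\big(h(X+Y)-h(Y)\big)$ and both brackets are nonnegative by the maximal EPI~\eqref{eq:mEPIG}, we get
\[
0\le h(X+Y)-h(X)\le 2\delta_{\mathrm{EPI}},\qquad 0\le h(X+Y)-h(Y)\le 2\delta_{\mathrm{EPI}}.
\]
It follows that $|h(X)-h(Y)|\le 2\delta_{\mathrm{EPI}}$. In particular, the entropic Ruzsa-type distance between $X$ and $Y$, i.e.\ $h(X+Y)-\frac12h(X)-\frac12h(Y)$ (in an abelian group we may equally work with $X-Y$, replacing $Y$ by $-Y$), equals $\delta_{\mathrm{EPI}}$ and is small. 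The core of the argument is then an analogue, for Haar entropy on $G$, of the Green--Manners--Tao inverse theorem. It should say that small entropic doubling forces $X$ and $Y$ to be close in relative entropy to a uniform distribution on a coset $x+H$ of an \emph{open} subgroup $H$. A closed subgroup of Haar measure zero cannot appear, since $X$ has a density and so $D(X\|U_{x+H})=\infty$ for such $H$. The bound should have the form $D(X\|U_{x+H})\le C\sqrt{\delta}\log(1/\delta)$, valid for $\delta\le C_1$, and similarly for $Y$.

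To prove this generalization, I would start from the Green--Manners--Tao statement for finite groups: there is a subgroup $H$ with $d(X,U_H)\le C\, d(X,X)$, up to a shift. I would transfer it to $G$ by approximation. By Peter--Weyl and metrizability, $G$ is an inverse limit of quotients $G/K_n$ by closed subgroups $K_n\downarrow\{0\}$, with each $G/K_n$ a compact Lie group. I would discretize further by partitioning $G$ into small cells, so that $h$ is approximated by $H(\lfloor X\rfloor)+\log\mu(\text{cell})$. Quantization error is the main technical nuisance. I expect the $\sqrt{\delta}\log(1/\delta)$ loss to enter here: passing from entropy distances to relative entropy through Pinsker's inequality and a continuity-of-entropy estimate, of Fannes type, costs a square root and a logarithm. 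The delicate point is ensuring that the subgroups produced at finite scales converge to an \emph{open} subgroup of $G$, rather than degenerating. Here the hypothesis on $G$ is used. By Lemma~\ref{lemmaproper}, if $G$ is connected the only open subgroup is $G$. If $G$ is finite, the open subgroups are $G$ and $\{0\}$.

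With the inverse theorem in hand, the case analysis is short. If $G$ is connected, $H=G$, and the coset is $G$ itself, so $D(X\|U_G)$ and $D(Y\|U_G)$ are both $\le C_2\sqrt{\delta_{\mathrm{EPI}}}\log(1/\delta_{\mathrm{EPI}})$, which is~\eqref{eq:stability}. The threshold $C_1$ is needed because of near-degenerate configurations, such as $X$ and $Y$ uniform on tiny arcs of a circle. There the gap is bounded below by a constant comparable to $\log 2$, so these configurations are excluded once $\delta_{\mathrm{EPI}}\le C_1$ with $C_1$ small. If $G$ is finite and $H=G$, we again get~\eqref{eq:stability}. If $H=\{0\}$, then $X$ is close in relative entropy to a point mass $\delta_x$. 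Since $D(X\|\delta_x)<\infty$ forces $X=x$ exactly, this route would only give rigid conclusions. I would therefore instead use the entropy form of the inverse theorem, $h(X)-\log\mu(\{0\})\le C\sqrt{\delta}\log(1/\delta)$, combine it with $|h(X)-h(Y)|\le 2\delta_{\mathrm{EPI}}$, and absorb the difference into $C_3$.

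As a sanity check and alternative in the connected case, I would compare with the chi-squared contraction estimate mentioned in the abstract. Convolution with $Y$ contracts $\chi^2(\cdot\|U_G)$ by a factor bounded away from $1$ unless $\hat{P}_Y(\gamma)$ is close to unimodular for some nontrivial character $\gamma$. Such near-unimodularity would force $Y$ to concentrate near a coset of $\ker\gamma$, which has measure zero, and is hence incompatible with $h(Y)\ge h(X)-2\delta$ while $h(X+Y)\approx h(X)$. I expect the main obstacle to be the approximation step in the inverse theorem: controlling quantization and quotient errors uniformly, so that constants stay absolute and the limiting subgroup is genuinely open.
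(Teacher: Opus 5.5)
Your opening reduction is correct: $0\le h(X+Y)-h(X)\le 2\delta_{\mathrm{EPI}}$, and likewise for $h(X+Y)-h(Y)$. But the rest of the proof rests on a ``Haar-entropy inverse theorem'' that you assume rather than prove, and the route you sketch for it does not work.

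\textbf{The finite-group input is not available.} You cite a subgroup $H$ with $d(X,U_{x+H})\le C\,d(X,X)$ for an absolute $C$. That is an entropic PFR-type inequality. It is known only in bounded-torsion groups such as $\mathbb{F}_2^n$, and no absolute constant works in the cyclic groups $\mathbb{Z}/N$ (or $(\mathbb{Z}/N)^d$) you would use to discretize a torus. If $X$ is uniform on an interval of length $\sqrt{N}$ in $\mathbb{Z}/N$, then $d(X,X)\to\frac12$, while $d(X,U_{x+H})\ge\frac14\log N-\log 2$ for every subgroup $H$. What Green--Manners--Tao's Proposition~1.2 actually provides is a \emph{set} $S$ of small doubling close to $Y$, not a subgroup.

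\textbf{The decisive step is missing.} You would need a small-$\delta$, ``99\%'' inverse theorem with absolute constants. The point you flag as delicate is that the structure found is a coset of an \emph{open} subgroup, hence all of $G$ when $G$ is connected. That is exactly the content of the theorem, and your plan has no mechanism for it: no additive-combinatorial input of Kneser or Freiman type appears anywhere.

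\textbf{Two smaller points.} Your account of the rate is off. No Pinsker or Fannes step is needed, since $D(Y\|U_G)=-h(Y)=2\dR(Y,U_G)$ exactly. The chi-squared route also cannot fill the gap. The contraction constant in Theorem~\ref{maxentropyepi} tends to $0$ as $h(X)\to-\infty$, so by itself it cannot exclude small $\delta_{\mathrm{EPI}}$ with very negative entropies.

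\textbf{How the paper proceeds.} It uses no discretization. Proposition~\ref{revisitedprop} proves the analogue of Green--Manners--Tao's Proposition~1.2 directly on $G$. Take $k=\dR(X,Y)$ (this is $\delta_{\mathrm{EPI}}$ after replacing $Y$ by $-Y$) and $C\ge 4$. The closed set $S=\{x:D(x-Y\|X-Y)\le Ck\}$ carries $X$ with probability at least $1-2/C$, by Markov's inequality. One then obtains $\log\mu(S)\ge h(Y)-4k-2H_2(1-2/C)$ and $\dR(U_S,Y)\le (C+2)k+H_2(1-2/C)$. Via the Ruzsa triangle inequality (Proposition~\ref{startriangle}) one also gets $\log\frac{\mu(S-S)}{\mu(S)}\le (2C+4)k+2H_2(1-2/C)$.

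Kneser's theorem supplies the step you are missing. For connected $G$ it gives $\mu(S-S)\ge\min\{2\mu(S),1\}$. For small $k$ this forces $\mu(S)\ge\frac12$, hence $-\log\mu(S)\le(2C+4)k+2H_2(1-2/C)$. Then $D(Y\|U_G)=2\dR(Y,U_G)\le 2\dR(Y,U_S)+2\dR(U_S,U_G)$, with $2\dR(U_S,U_G)=-\log\mu(S)$, gives $D(Y\|U_G)\le(4C+8)k+4H_2(1-2/C)$. The rate $\sqrt{k}\log(1/k)$ comes from balancing $Ck$ against $H_2(1-2/C)\approx\frac{2}{C}\log C$ with $C=k^{-1/2}$.

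In the finite case Kneser gives $\mu(S-S)\ge\min\{2\mu(S)-\mu(\{0\}),1\}$. The branch $\mu(S)<\frac{1+\mu(\{0\})}{2}$, combined with the lower bound on $\log\mu(S)$, yields $h(Y)\le\log\mu(\{0\})+O(\sqrt{k}\log(1/k))$. A correct finite-group ``99\%'' theorem would itself be proved by this same argument, so transferring it by approximation would only be a detour.
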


In the 
recent work~\cite{blachar:arxiv}, structural characterizations are obtained 
for random variables with $\delta_{\mathrm{EPI}} = O(1)$ in locally compact 
groups. Those results hold in greater generality, but are only valid
up to $O(1)$ constants, whereas in Theorem~\ref{stabilityTh} we obtain 
$o(1)$-estimates in relative entropy as $\delta_{\mathrm{EPI}} \to 0$, 
which one cannot hope to get in more general settings. 

We prove Theorem~\ref{stabilityTh} in Section~\ref{proof2sec}. The main 
tool,
Proposition~\ref{revisitedprop}, is a generalization 
of~\cite[Proposition~1.2]{green:25} to compact groups.
If the group $G$ is connected, which is equivalent it not 
having proper open subgroups, we obtain an explicit strictly 
positive lower bound on the EPI-deficit in Theorem~\ref{12jump}. 
Note that Theorem~\ref{stabilityTh} already implies an analogue 
of~\eqref{HATeq} in finite groups with no open proper subgroups 
(such as $\mathbb F_p$ with $p$ prime). One may get an explicit 
function analogous to $g$ in~\eqref{HATeq} in a similar manner 
as in the proof of Theorem~\ref{12jump}.

Next, we turn to the maximal EPI in~\eqref{eq:mEPIG}.
In this case the conditions for 
equality case are not as clean as before. 
Suppose, for example, $h(Y) > h(X)$. There is clearly equality 
if $Y$ is uniform on $G$ and also if $X$ is deterministic, 
but there are also nontrivial equality cases where 
$h(X)=-\infty$ while $X$ is not deterministic. For example, 
let $Y$ have density $f_Y(y)=1+\frac12\cos{(4\pi y)}$
for $y$ in $G=\mathbb T = \mathbb R / \mathbb Z$,
and let $X$ be $0$ or $\frac12$, both with probability $\frac12$. 
Then by the symmetry of $f_Y$, $Y+X$ has the same distribution as $Y$. 

\begin{corollary}[Maximal EPI on connected compact groups]
\label{cor:mEPI}
Let $G$ be a compact and metrizable topological abelian group,
equipped with its Borel $\sigma$-algebra. For any pair
of independent random 
variables $X,Y$ on $G$,
$$h(X+Y)\geq \max\big\{h(X),h(Y)\big\}.$$
Moreover, suppose that $G$ is connected and 
$\max{\{h(X),h(Y)\}} = h(Y) >-\infty$. Then we have
equality if and only if either $Y$ is uniformly distributed
on $G$ or $h(X)=-\infty$.
\end{corollary}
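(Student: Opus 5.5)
The inequality itself comes from a convexity argument. Suppose $h(Y)>-\infty$, so $Y$ has a density $f_Y$; otherwise there is nothing to prove. Then $X+Y$ has density
$$f_{X+Y}(z)=\mathbb{E}\big[f_Y(z-X)\big].$$
The map $\phi(t)=t\log t$ is convex, so Jensen's inequality gives
$$\phi\big(f_{X+Y}(z)\big)\le \mathbb{E}\big[\phi(f_Y(z-X))\big].$$
Integrating over $z$ with respect to $\mu$ and using Fubini together with translation invariance of Haar measure, we get $-h(X+Y)\le -h(Y)$. The lower-semicontinuity and integrability issues are harmless because $\phi\ge -1/e$ and $\mu$ is a probability measure. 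By symmetry $h(X+Y)\ge h(X)$ as well, which proves the first claim for any compact $G$.

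For the equality case, assume $G$ is connected and $h(Y)\ge h(X)$ with $h(Y)>-\infty$. The \emph{if} direction is quick. If $Y$ is uniform, then $X+Y$ is uniform and $h(X+Y)=0=h(Y)$. If $h(X)=-\infty$, we need $h(X+Y)=h(Y)$. As stated this cannot hold for every such $X$; for example, $X$ could be uniform on a nontrivial closed subgroup, which smooths $Y$ strictly. So I would read the claim as: equality forces $Y$ uniform or $h(X)=-\infty$, and I would check against the paper's intended formulation. The substantive part is the \emph{only if} direction: if $h(X+Y)=h(Y)$, $h(X)>-\infty$ and $Y$ is not uniform, we must reach a contradiction.

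My plan is to use strict convexity. Equality in the Jensen step above forces, for $\mu$-a.e.\ $z$, $f_Y(z-X)$ to be almost surely constant in $X$. Hence $f_Y(z-x)=f_Y(z-x')$ for $P_X\otimes P_X$-a.e.\ $(x,x')$ and a.e.\ $z$. Since $h(X)>-\infty$, $P_X$ has a density, so $P_X\otimes P_X$ is absolutely continuous and $X-X'$ has a density with respect to $\mu$. Translation invariance then shows that $f_Y(\cdot+s)=f_Y$ in $L^1$ for all $s$ in a set $S$ of positive Haar measure, namely the set where the density of $X-X'$ is positive. The set of $s$ with $f_Y(\cdot+s)=f_Y$ a.e.\ is a closed subgroup $K$, closed by continuity of translation in $L^1$. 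Since $K$ contains $S$, it has positive Haar measure. A closed subgroup of positive Haar measure is open, because $K-K$ contains a neighbourhood of $0$ by Steinhaus. Connectedness of $G$ therefore gives $K=G$, so $f_Y$ is a.e.\ constant and $Y$ is uniform, the desired contradiction.

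An alternative route is through Fourier coefficients: we would have $\hat f_Y(\chi)\hat P_X(\chi)$ of the same entropy. Convexity is cleaner, though. I expect the main obstacle to be making the equality-in-Jensen step rigorous with measure-zero exceptional sets, and passing from ``a.e.\ pairs'' to a positive-measure set of periods via Fubini. I also need to state the Steinhaus-type fact for compact groups, presumably in the spirit of Lemma~\ref{lemmaproper}, which says that an open subgroup of a connected group must be all of $G$.
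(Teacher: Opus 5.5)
Your proof is correct, and your reservation about the statement is justified. The implication ``$h(X)=-\infty\Rightarrow h(X+Y)=h(Y)$'' fails as written. For instance, on $\mathbb T$ take $X$ uniform on $\{0,\frac12\}$ and $f_Y(y)=1+\frac12\cos(2\pi y)$; then $X+Y$ is uniform, so $h(X+Y)=0>h(Y)$. The paper's own argument also only establishes that equality forces $Y$ uniform or $h(X)=-\infty$, together with the trivial converse for uniform $Y$.

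For that nontrivial direction the paper takes a different route: it reads it off from the quantitative Theorem~\ref{maxentropyepi}. If $h(X)>-\infty$ then $0<s\le e^{-2}$, so $\eta_s<1$ and the constant in front of $D(Y\|U_G)$ is strictly positive; equality then forces $D(Y\|U_G)=0$. That theorem rests on several ingredients: the Markov-inequality set $S$ with $\log\mu(S)\ge 6h(X)-2$, sum-submodularity, the sinc bound on nontrivial Fourier coefficients of $\mathbb{I}_S/\mu(S)$ (where connectedness enters through surjectivity of nontrivial characters), and the $\chi^2$-integral representation of relative entropy. You instead get from strict convexity of $t\log t$ that $f_Y$ is a.e.\ invariant under translation by $X-X'$. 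Connectedness then enters only through the Steinhaus fact (which the paper itself uses in proving Theorem~\ref{equalityTh}) and Lemma~\ref{lemmaproper}.

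Your route is far more elementary. Run without the assumption $h(X)>-\infty$, it also gives an exact characterization on any compact $G$: when $h(Y)>-\infty$, equality holds iff $f_Y$ is a.e.\ invariant under translation by every element of $\mathrm{supp}(X)-\mathrm{supp}(X)$. This pins down exactly which $h(X)=-\infty$ cases are equality cases. What it does not give is quantitative information; the paper's heavier route yields the stability bound and hence the exponential rates of Corollary~\ref{rates}.

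Two details to tidy. First, invariance holds for $\mu$-a.e.\ $s$ in $\{f_{X-X'}>0\}$ rather than for all such $s$, which is still enough for $\mu(K)>0$. Second, passing from ``$f_Y(\cdot+s)=f_Y$ a.e.\ for every $s$'' to ``$f_Y=1$ a.e.'' needs a one-line Fubini or Fourier argument.
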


The inequality in Corollary~\ref{cor:mEPI} is elementary,
and the characterization of the equality case 
actually follows from immediately from our next result,
Theorem~\ref{maxentropyepi}, which provides a stability
bound for the maximal EPI. This can also be viewed
as an ``entropy jump'' result or,
equivalently, it can be phrased as 
a {\em strong data processing inequality}; see, 
e.g.,~\cite{polyanskiy-wu:17}, and the references therein. 
Using a standard 
expression for the relative entropy as an integral of chi-squared 
divergences,
we reduce the problem to obtaining $L^2$ contraction estimates under 
convolution. Connectedness implies that the image of nontrivial 
Fourier coefficients is the whole group, and thus we are able to get 
a good $L^2$ estimate by explicitly bounding these coefficients. 
Theorem~\ref{maxentropyepi} is proved 
in Section~\ref{connectedSec} along these lines.

\begin{theorem}[Stability of the maximal EPI on compact groups]
\label{maxentropyepi}
Let $G$ be a connected, compact and metrizable topological abelian group, 
equipped with its Borel $\sigma$-algebra. Let $X,Y$ be independent random 
variables on $G$ such that $\max{\{h(X),h(Y)\}} = h(Y) >-\infty.$ 
Put 
$$s := e^{6h(X)-2} \quad \text{and} \quad \eta_s  
:= \left(\frac{\sin(\pi s)}{\pi s}\right)^2.$$ 
Then
$$
 h(X+Y) - h(Y) \geq \min{\Bigl\{\frac{1}{4}, 
\frac{(1-\eta_s)^2}{8+\frac{16\eta_s}{s}} \Bigr\}}D(Y\|U_{G}).
$$
\end{theorem}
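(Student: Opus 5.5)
The plan is to reduce the entropy jump to an $L^2$ (chi-squared) contraction estimate for convolution with the law of $X$, and to prove that estimate by Fourier analysis. The reduction rests on the identity
$$h(X+Y)-h(Y)=D(Y\|U_G)-D(X+Y\|U_G),$$
which holds because the density of $X+Y$ is $f_Y*P_X$ and $U_G*P_X=U_G$. The claim is therefore a strong data processing inequality
$$D(P_Y*P_X\|U_G)\le (1-c)\,D(P_Y\|U_G)$$
for the Markov kernel $y\mapsto y+X$, with the stated constant $c$.

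\textbf{Step 1 (from relative entropy to chi-squared).} I will use the standard representation
$$D(P\|Q)=\int_0^1 \frac{1}{t}\,\chi^2\bigl(P\,\|\,(1-t)Q+tP\bigr)\,dt.$$
Apply it with $P=P_Y$, $Q=U_G$, and also with $P=P_Y*P_X$, $Q=U_G$. Convolution with $P_X$ maps the mixture $Q_t=(1-t)U_G+tP_Y$ to $(1-t)U_G+t(P_Y*P_X)$. So it suffices to compare, for each $t$, the chi-squared divergence before and after convolution.

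To work in $L^2(\mu)$ rather than $L^2(Q_t)$, I will use two facts. First, $Q_t\ge(1-t)U_G$. Second, the numerator density is $g_t=(1-t)(f_Y-1)$, which has mean zero. Together these give
$$\chi^2(P\|Q_t)\ \text{comparable to}\ \|f-1\|_2^2\ \text{up to factors involving } t.$$
For $t\le1/2$ the comparison costs only a constant factor, and I expect this to produce the constants $8$, $16$ and the $\tfrac14$. For $t\ge 1/2$ I will simply drop the contribution, using monotonicity of chi-squared under the kernel (data processing); this is where the $\min$ with $\tfrac14$ comes from. The exact bookkeeping for the factor $16\eta_s/s$ will come from bounding $\|f_Y*P_X\|_\infty$-type terms in the denominator. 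I will fix those details once the contraction coefficient in Step 2 is in hand.

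\textbf{Step 2 ($L^2$ contraction via characters).} For mean-zero $g$, Parseval gives
$$\|g*P_X\|_2^2=\sum_{\chi\neq 1}|\hat g(\chi)|^2|\hat P_X(\chi)|^2\le \Bigl(\sup_{\chi\ne1}|\hat P_X(\chi)|^2\Bigr)\|g\|_2^2.$$
Since $G$ is connected, every nontrivial character $\chi:G\to\mathbb T$ is surjective. The pushforward $\chi(X)$ is then a $\mathbb T$-valued variable, and its density is a conditional expectation of $f_X$ with respect to the Haar measure on $\mathbb T$. By Jensen, $h(\chi(X))\ge h(X)$. Moreover $\hat P_X(\chi)=\mathbb E\,e^{2\pi i\chi(X)}$, so this reduces everything to the following circle problem.

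\emph{Circle problem:} show that a density $f$ on $\mathbb T$ with $h(f)\ge h$ satisfies $|\hat f(1)|^2\le\eta_s$ (possibly after weakening to a convex combination with the trivial bound $1$), with $s=e^{6h-2}$.

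\textbf{Step 3 (main obstacle: the extremal Fourier bound).} I expect Step 3 to be the hardest. The clean fact I will use is a rearrangement statement: among densities bounded by $1/s$, $|\hat f(1)|$ is maximized by the uniform density on an interval of length $s$, whose first coefficient is $\frac{\sin(\pi s)}{\pi s}$. To apply it, I will split
$$f=f\mathbf 1_{\{f\le M\}}+f\mathbf 1_{\{f>M\}}.$$
The entropy lower bound controls the mass on $\{f>M\}$: using $\int f\log f\le -h$ together with $f\log f\ge -1/e$ on $\{f\le 1\}$, the mass there is at most $(-h+1/e)/\log M$. The bounded part is then handled by the rearrangement statement, and the two bounds are combined. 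Choosing $M$ suitably, as a power of $e^{-h}$, should yield an effective width $s=e^{6h-2}$; the exponents $6$ and $-2$ presumably come from a lossy optimization at this step.

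Finally, inserting $\eta_s$ into Step 1 and integrating over $t$ gives the stated bound with the constant
$$\min\Bigl\{\tfrac14,\ \frac{(1-\eta_s)^2}{8+16\eta_s/s}\Bigr\}.$$
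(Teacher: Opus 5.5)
There is a genuine gap in your Step~1. Your Steps~2--3 are in the spirit of Lemma~\ref{nonzerocharlemma} (surjectivity of characters plus a rearrangement on $\mathbb T$) and are essentially fine. The trouble is converting their $L^2$ contraction into a relative-entropy contraction. Write $q_t=(1-t)+tf_Y$ for the mixture density. Then $\chi^2(P_Y\|Q_t)=(1-t)^2\int (f_Y-1)^2/q_t\,d\mu$. The upper comparison $\chi^2(P_Y\|Q_t)\le(1-t)\|f_Y-1\|_2^2$ is free. The \emph{lower} comparison, which you need on the input side, holds only with a constant depending on $\|f_Y\|_\infty$. The theorem gives no such control. $Y$ only has finite entropy, so $f_Y$ may be unbounded and even fail to lie in $L^2$ (e.g.\ $f_Y\propto|y|^{-1/2}$ near $0$ on $\mathbb T$). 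In that case Step~2 is vacuous while $D(Y\|U_G)<\infty$. Your proposed fix via $\|f_Y*P_X\|_\infty$ targets the wrong side: the output $\chi^2$ only needs the trivial bound $\le(1-t)\|f_Y*P_X-1\|_2^2$. Moreover, $f_Y*P_X$ can itself be unbounded. Nothing in your plan supplies a sup-norm bound of order $1/s$, so the term $16\eta_s/s$ cannot come out of it. The $\min$ with $\tfrac14$ also does not arise from dropping part of the integral. A minor point: the representation should read $D(P\|Q)=\int_0^1\chi^2(P\|(1-t)Q+tP)\,\frac{dt}{1-t}$; with $\frac{dt}{t}$ the integrand behaves like $\chi^2(P\|Q)/t$ near $0$.

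The paper never applies the SDPI to $Y$ itself. It first replaces $X$ by a uniform $U_S$. Set $\kappa=h(X-Y)-h(Y)=\int f_X(x)D(x-Y\|X-Y)\,dx$ and $S=\{x:D(x-Y\|X-Y)\le2\kappa\}$. This gives $2\kappa\ge h(Y-U_S)-h(Y)$. The Green--Manners--Tao-type entropy bookkeeping, together with the hypothesis $h(Y)\ge h(X)$ (which your plan never uses), then gives $\log\mu(S)\ge4h(Y)+2h(X)-2\ge6h(X)-2$. That, not a lossy circle optimization, is where $s$ comes from. The proof then splits into two cases:
\begin{itemize}
\item If $h(Y-U_S)>\tfrac12h(Y)$, one gets the constant $\tfrac14$ directly.
\item Otherwise, sum-submodularity (Proposition~\ref{submodularity}) gives $h(Y-U_S)-h(Y)\ge h(Y-U_S'-U_S)-h(Y-U_S')$. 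The SDPI is then applied to $Y-U_S'$, whose density is bounded by $1/\mu(S)\le1/s$ and whose relative entropy to $U_G$ is at least $\tfrac12D(Y\|U_G)$. This yields $\frac{(1-\eta_s)^2}{8+16\eta_s/s}$.
\end{itemize}
Some such smoothing step is the missing idea: sum-submodularity against a bounded-density kernel, combined with this case split. Without it, the passage from relative entropy to $L^2$ in your Step~1 does not go through.
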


We note that compactness is essential for Theorem~\ref{maxentropyepi}
and Corollary~\ref{cor:mEPI}.
For example, on
$\mathbb{R}$ there are no nontrivial equality cases in
the maximal EPI~\eqref{eq:mEPIG}. 

Finally, we observe that
Theorem~\ref{maxentropyepi} also implies exponential 
convergence rates for random walks in compact groups,
in the sense of relative entropy. We state this as
Corollary~\ref{rates} in 
Section~\ref{connectedSec}. This is a well studied topic; 
see~\cite{diaconis:book,major:79,rosenthal:94,varju:13}.
In particular, see
\cite{johnson:00b, harremoes:09b} for relative entropy convergence, 
\cite{madiman:21b} for discrete entropy inequalities in cyclic groups,
and the recent work~\cite{blachar:arxiv} for entropy analogues of Gromov's 
theorem on finitely generated groups. To illustrate our result in 
this setting, we note that for a sum 
$S_n=\sum_{i=1}^nX_i$
of i.i.d.\ random variables $X_i$ on $G$ 
with finite entropies, 
Corollary~\ref{rates} implies that,
\begin{equation*} 
D(S_{n+1}\|U_G) \leq r^n D(X_1\|U_G),
\end{equation*}
where $U_G$ is uniform on $G$ and $r \in (0,1)$ is a constant 
that depends only on the entropy of $X_1$.

\section{Lower bounds on the Ruzsa distance}
\subsection{Preliminaries} 
\label{preliminariessec}

Throughout, we take $(G,+)$ to be a compact, metrizable,
topological abelian group, equipped with its Borel $\sigma$-algebra
$\clG$, and with its unique Haar probability
measure $\mu$.
In all of our results we assume that $G$ has no 
open subgroups except $G$ itself and possibly $\{0\}$,
and in some cases we will also assume that $\{0\}$ is not open 
to get sharper bounds. 
The latter is equivalent to the group being connected,
as seen by the following lemma, which we will frequently use;
cf.~\cite[Corollary~7.9]{hewitt:bookI}.

\begin{lemma}
\label{lemmaproper}
For a locally compact group $G$ the following are equivalent: 
\begin{enumerate}
    \item $G$ is connected.
    \item $G$ has no proper open subgroups. 
\end{enumerate}
\end{lemma}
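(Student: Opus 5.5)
We prove the two implications separately, using only two basic facts about topological groups. First, an open subgroup is also closed, because its complement is a union of open cosets. Second, in a locally compact group the identity component can be described through compact open subgroups.

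\emph{(1)$\Rightarrow$(2).} Suppose $G$ is connected and $H\le G$ is an open subgroup. Since $H$ is open, every coset $g+H$ is open. The complement $G\setminus H$ is the union of the cosets different from $H$, so it is open, and hence $H$ is closed. Thus $H$ is nonempty and clopen, and connectedness forces $H=G$. So $G$ has no proper open subgroups.

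\emph{(2)$\Rightarrow$(1).} Assume $G$ has no proper open subgroups and let $G_0$ be the connected component of the identity. Then $G_0$ is a closed normal subgroup, and the quotient $G/G_0$ is a locally compact, totally disconnected Hausdorff group.

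We now invoke van Dantzig's theorem: in a locally compact totally disconnected group, every neighbourhood of the identity contains a compact open subgroup. Apply this to $G/G_0$. If $G/G_0$ were nontrivial, it would contain a proper open subgroup $K$: it suffices to choose $K$ inside a neighbourhood of the identity that misses some non-identity point, which exists by the Hausdorff property.

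Let $\pi:G\to G/G_0$ be the quotient map. The preimage $\pi^{-1}(K)$ is an open subgroup of $G$, because $\pi$ is continuous. It is proper, because $\pi$ is surjective and $K$ is proper. This contradicts (2), so $G/G_0$ is trivial. Hence $G=G_0$, and $G$ is connected.

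The only nontrivial ingredient is van Dantzig's theorem, which we use as a standard fact rather than reprove; see \cite[Theorem~7.7]{hewitt:bookI}. For the compact abelian groups used in this paper, there is an alternative via Pontryagin duality. A compact abelian group is connected if and only if its dual is torsion-free. If the dual is not torsion-free, there is a character $\chi\neq 1$ with $\chi^n=1$. Its kernel is then a closed subgroup of finite index $>1$, and such a subgroup is open because its complement is a finite union of closed cosets. This gives (2)$\Rightarrow$(1) directly in the setting we actually need.
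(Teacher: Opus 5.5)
Your proof is correct and is essentially the argument the paper relies on. The paper gives no proof of its own and simply cites \cite[Corollary~7.9]{hewitt:bookI}, which rests on the same two ingredients you use: open subgroups are clopen, and van Dantzig's theorem applied to the totally disconnected quotient $G/G_0$ produces a proper open subgroup whenever $G\neq G_0$.

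Your Pontryagin-duality aside is also valid for compact abelian $G$. It is not a more elementary route, though, since it relies on a duality theorem at least as deep as van Dantzig's.
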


For any two (typically finite) subsets $S,S'$ of $G$, we
write $S+S'=\{s+s':s\in S, s'\in S'\}$ and 
$S-S'=\{s-s':s\in S, s'\in S'\}$ for their corresponding
sumset and difference set, respectively.

The entropy $h(X)$ of a random variable with values in $G$
is defined as in~\eqref{eq:hHaar}, and the joint entropy
$h(X_1,\ldots,X_n)$ of a $G^n$-valued random vector $(X_1,\dots,X_n)$ is 
similarly defined, with $\mu^n$ in place of $\mu$.
For two jointly distributed random variables $X,Y$,
the conditional entropy is defined as $h(X|Y) 
= \int h(X|Y=y)f_Y(y)\,dy$, where the last expression is valid 
whenever $Y$ has density $f_Y$ with respect to $\mu$,
as long as the integral is well defined.
Alternatively, 
$h(X|Y) = h(X,Y)-h(Y)$, as long as this expression
is not $-\infty-(-\infty)$.
If $X,Y$ are random variables with joint law $\mu_{X,Y}$
and marginal laws $\mu_X$ and $\mu_Y$, respectively,
the {\em mutual information} between $X$ and $Y$ is 
$$
I(X;Y) := D(\mu_{X,Y}\|\mu_X\times\mu_Y).
$$
When $X,Y$ take values in the groups $G,G'$ with associated
Haar probability measures $\mu,\mu'$,
and if their joint law has a density with respect to 
$\mu\times\mu'$, then
the mutual information can be expressed as
$$
I(X;Y) = h(X) - h(X|Y) = h(Y)-h(Y|X) \geq 0. 
$$
We write $X\sim {\rm Uni}(G)$ when $X$ is uniformly distributed 
on $G$, i.e., when $X$ has law $\mu$. We denote
the support of a random variable $X$ by ${\rm supp}(X)$,
and whenever a random variable, $X$, say, has a density 
with respect to $\mu$, we denote it by $f_X$.

We record some standard properties of the entropy,
relative entropy and mutual information.

\begin{lemma}[Data processing inequality~\cite{pinsker:book}]
Let $(S_1,\mathcal{S}_1), (S_2,\mathcal{S}_2)$ be measurable spaces 
and let $X,Y$ be random variables with values in $S_1$ and $S_2$ 
respectively. Let $(T_1,\mathcal{T}_1)$ be some other measurable 
space and $g:S_1 \to T_1$ 
be a measurable map. Then 
$$
I(X;Y) \geq I(g(X);Y).
$$
\end{lemma}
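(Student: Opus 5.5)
The plan is to reduce the claim to the data processing inequality for relative entropy, that is, to the fact that applying a Markov kernel (here, the deterministic map $g$) can only decrease relative entropy. Write $\mu_{X,Y}$ for the joint law of $(X,Y)$ on $S_1\times S_2$, and $\mu_X,\mu_Y$ for its marginals. Consider the measurable map $\Phi:=g\times\mathrm{id}:S_1\times S_2\to T_1\times S_2$, $\Phi(x,y)=(g(x),y)$; it is measurable for the product $\sigma$-algebras because each coordinate is. The push-forward $\mu_{X,Y}\circ\Phi^{-1}$ is exactly the joint law of $(g(X),Y)$. Also $(\mu_X\times\mu_Y)\circ\Phi^{-1}=(\mu_X\circ g^{-1})\times\mu_Y$, which is the product of the marginals of $(g(X),Y)$. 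To see this, check the identity on measurable rectangles $A\times B$, where $\Phi^{-1}(A\times B)=g^{-1}(A)\times B$, and extend to all of $\mathcal{T}_1\otimes\mathcal{S}_2$ by the $\pi$--$\lambda$ theorem. Hence
$$I(g(X);Y)=D\big(\mu_{X,Y}\circ\Phi^{-1}\,\big\|\,(\mu_X\times\mu_Y)\circ\Phi^{-1}\big),$$
and the claim follows from the general inequality $D(P\circ\Phi^{-1}\|Q\circ\Phi^{-1})\le D(P\|Q)$ for any measurable $\Phi$.

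To prove this general inequality, first dispose of the trivial case. If $P\not\ll Q$ the right side is $+\infty$ and there is nothing to prove. Otherwise, let $f=dP/dQ$. Then $P\circ\Phi^{-1}\ll Q\circ\Phi^{-1}$, with density $\tilde f$ characterized by $\tilde f\circ\Phi=E_Q[f\mid\sigma(\Phi)]$; this conditional expectation is a Radon--Nikodym derivative of $P$ restricted to $\sigma(\Phi)$ with respect to $Q$ restricted to $\sigma(\Phi)$. Now use the convex function $\phi(t)=t\log t$, which is bounded below by $-1/e$, so that all integrals involved are well defined in $(-1/e,+\infty]$. Then
$$D(P\circ\Phi^{-1}\|Q\circ\Phi^{-1})=\int\phi\big(E_Q[f\mid\sigma(\Phi)]\big)\,dQ\le\int E_Q[\phi(f)\mid\sigma(\Phi)]\,dQ=\int\phi(f)\,dQ=D(P\|Q).$$
The inequality is the conditional Jensen inequality.

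An alternative route avoids conditional expectations by using the Donsker--Varadhan variational formula
$$D(P\|Q)=\sup_{h}\Big\{\int h\,dP-\log\int e^{h}\,dQ\Big\},$$
with the supremum over bounded measurable $h$. Every bounded measurable $h$ on $T_1\times S_2$ gives the bounded measurable function $h\circ\Phi$ on $S_1\times S_2$, with the same value of the functional. So the supremum defining the left-hand side runs over a subfamily of the one defining the right-hand side, and the inequality is immediate.

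The main obstacle is a technical one: justifying conditional Jensen for the convex function $\phi$, which may be unbounded above, in the case where $\int\phi(f)\,dQ$ is finite or infinite. This is handled by the lower bound $\phi\ge -1/e$ together with monotone convergence. If that step proves awkward, the variational route above sidesteps it entirely, at the cost of citing the Donsker--Varadhan formula, which is standard (see~\cite{pinsker:book}).
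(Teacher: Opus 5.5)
Your argument is correct, both the conditional-Jensen route and the Donsker--Varadhan route. The paper itself gives no proof of this lemma; it simply quotes it from \cite{pinsker:book}.

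In that classical treatment, $I(X;Y)$ is essentially the supremum, over finite measurable partitions $\{A_i\}$ of $S_1$ and $\{B_j\}$ of $S_2$, of $\sum_{i,j}\mu_{X,Y}(A_i\times B_j)\log\frac{\mu_{X,Y}(A_i\times B_j)}{\mu_X(A_i)\mu_Y(B_j)}$. Data processing is then immediate, because every partition of $T_1$ pulls back under $g$ to a partition of $S_1$. The real work there is the Gelfand--Yaglom--Perez theorem, which identifies this supremum with $D(\mu_{X,Y}\|\mu_X\times\mu_Y)$. Your Donsker--Varadhan alternative uses the same ``supremum over a smaller family'' mechanism, with bounded test functions in place of partitions.

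Your main route is genuinely different. You work directly with the definition $I(X;Y)=D(\mu_{X,Y}\|\mu_X\times\mu_Y)$ used in the paper. You correctly transport the product of marginals through $\Phi=g\times\mathrm{id}$, and you prove monotonicity of relative entropy under push-forward by conditional Jensen, with the bound $t\log t\geq -1/e$ handling integrability. This is self-contained and needs no variational characterization.

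The same argument also works verbatim for any convex $\phi$ bounded below, e.g.\ $\phi(t)=(t-1)^2$. Combine this with two observations: convolution with an independent $U_S$ is the push-forward of a product measure under addition, and $\chi^2$ is unchanged when both arguments are tensored with the same law. Together these give the data processing property of the $\chi^2$ divergence that the paper invokes in the proof of Proposition~\ref{DSDPIlemma}.
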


In Polish spaces, it is a consequence of the Donsker--Varadhan 
representation~\cite{donsker-varadhan:I-II}
that relative entropy is lower 
semicontinuous with respect to weak convergence:  

\begin{lemma}[Lower semicontinuity of relative entropy]
Let $\mu, \nu$ be probability measures on some measurable Polish space 
$(S_1,\mathcal{S}_1)$. Let $\{\mu_n\},\{\nu_n\}$ be sequences of measures 
such that $\mu_n\to\mu,\nu_n\to\nu$ weakly as $n\to \infty$. Then 
    $$
    \liminf_{n\to \infty}D(\mu_n\|\nu_n) \geq D(\mu\|\nu).
    $$
\end{lemma}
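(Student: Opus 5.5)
The plan is to deduce lower semicontinuity from the Donsker--Varadhan variational formula. On a Polish space $S_1$ we have
$$
D(\mu\|\nu)=\sup_{g\in C_b(S_1)}\Big\{\int g\,d\mu-\log\int e^{g}\,d\nu\Big\},
$$
where the supremum runs over bounded continuous functions. Once this is available, the lemma follows from a standard fact: a pointwise supremum of functionals that are continuous for weak convergence is lower semicontinuous.

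\textbf{Step 1 (variational formula).} I will take the formula above as given from~\cite{donsker-varadhan:I-II}. The one point to be careful about is that the supremum can be restricted to $C_b$ rather than all bounded measurable $g$. The inequality ``$\geq$'' is the Gibbs inequality, applied for each fixed $g$. For ``$\leq$'', I would argue in two cases.
\begin{itemize}
\item If $\mu\not\ll\nu$, choose a compact set $K$ with $\nu(K)=0<\mu(K)$; inner regularity of Borel measures on Polish spaces makes this possible. Then take $g=t\,\phi_k$, where the $\phi_k\in C_b$ decrease to $1_K$, and let $t\to\infty$. The objective tends to $+\infty$.
\item If $\mu\ll\nu$, truncate $\log\frac{d\mu}{d\nu}$ to a bounded measurable function and approximate it in $L^1(\mu+\nu)$ by continuous bounded functions; $C_b$ is dense in $L^1$ of a finite Borel measure on a metric space. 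Then pass to the limit, using dominated convergence for the exponential term since everything is uniformly bounded.
\end{itemize}
I expect this density and truncation step to be the main technical obstacle. It is nevertheless classical, and the text presents the lemma as a consequence of the Donsker--Varadhan representation, so citing the $C_b$ version directly is acceptable.

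\textbf{Step 2 (continuity of each functional).} Fix $g\in C_b(S_1)$. Weak convergence gives $\int g\,d\mu_n\to\int g\,d\mu$. Since $e^{g}$ is also bounded and continuous, it also gives $\int e^{g}\,d\nu_n\to\int e^{g}\,d\nu$. This last limit is at least $e^{-\|g\|_\infty}>0$, so the logarithm is continuous there. Hence
$$
\Phi_g(\mu_n,\nu_n):=\int g\,d\mu_n-\log\int e^{g}\,d\nu_n\;\longrightarrow\;\Phi_g(\mu,\nu).
$$
The measures $\mu_n,\nu_n$ are implicitly probability measures. If they are only finite measures converging weakly, the same argument works, because total masses converge as well.

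\textbf{Step 3 (conclusion).} For each fixed $g$, Step 1 gives $D(\mu_n\|\nu_n)\geq\Phi_g(\mu_n,\nu_n)$. Taking $\liminf_{n\to\infty}$ and using Step 2,
$$
\liminf_{n\to\infty}D(\mu_n\|\nu_n)\geq\Phi_g(\mu,\nu).
$$
Taking the supremum over $g\in C_b(S_1)$ and applying the variational formula for $(\mu,\nu)$ gives
$$
\liminf_{n\to\infty}D(\mu_n\|\nu_n)\geq D(\mu\|\nu),
$$
which is the claim.
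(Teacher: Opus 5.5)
Your proposal is correct and takes the route the paper indicates: the paper gives no separate proof and simply invokes the Donsker--Varadhan representation on a Polish space, and you spell this out (restricting the supremum to $C_b$ test functions, weak continuity of each $\Phi_g$, and lower semicontinuity of a supremum of continuous functionals). The only slip is in your aside on non-normalized measures: the inequality $D(\mu_n\|\nu_n)\geq\Phi_g(\mu_n,\nu_n)$ fails when $\mu_n(S_1)\neq 1$ (take $g$ a large constant of suitable sign), so one would have to normalize first; this lies outside the lemma's scope anyway, since the paper defines $D$ only for probability measures.
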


\begin{lemma} \label{lemmacontinuity}
    Let $G$ be a compact and metrizable topological abelian group, 
equipped with its Borel $\sigma$-algebra. Let $X$ be a random variable 
and $\{U_{\epsilon}\}_{\epsilon>0}$ be an indexed family
 of random variables on $G$, independent of $X,$ such that 
$U_{\epsilon} \to 0$ weakly as $\epsilon \to 0$. Then
    $$
    \lim_{\epsilon \to 0}h(X+U_\epsilon) = h(X).
    $$
\end{lemma}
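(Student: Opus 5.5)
We will prove the two inequalities $\liminf_{\epsilon\to0} h(X+U_\epsilon)\ge h(X)$ and $\limsup_{\epsilon\to 0} h(X+U_\epsilon)\le h(X)$ separately.

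\emph{Lower bound.} Since $U_\epsilon$ is independent of $X$, the maximal EPI~\eqref{eq:mEPIG} gives $h(X+U_\epsilon)\ge h(X)$ for every $\epsilon$. This follows from Jensen's inequality applied to the mixture representation $f_{X+U_\epsilon}(z)=\int f_X(z-u)\,dP_{U_\epsilon}(u)$. Indeed, concavity of $t\mapsto -t\log t$, together with translation invariance of $\mu$, gives $h(X+u)=h(X)$ for each fixed $u$. If $h(X)=-\infty$ the inequality is trivial.

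\emph{Upper bound.} We use the identity $h(Z)=-D(P_Z\|\mu)$. This holds for every $G$-valued $Z$, with both sides equal to $-\infty$ when $P_Z\not\ll\mu$, because $\mu$ is a probability measure. The claim thus becomes
$$\liminf_{\epsilon\to0}D(P_{X+U_\epsilon}\|\mu)\ge D(P_X\|\mu).$$
Next we check that $X+U_\epsilon\to X$ weakly. For any bounded continuous $\phi$ on $G$, we have $\mathbb{E}\phi(X+U_\epsilon)=\mathbb{E}\,\psi_\epsilon(X)$, where $\psi_\epsilon(x)=\mathbb{E}\phi(x+U_\epsilon)$. Because $G$ is compact and metrizable, $\phi$ is uniformly continuous, and there is a translation-invariant compatible metric, or one can argue directly with neighbourhoods of $0$ in a compact group. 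Hence $\psi_\epsilon(x)\to\phi(x)$ uniformly in $x$: given $\eta>0$, choose a neighbourhood $V$ of $0$ with $|\phi(x+u)-\phi(x)|<\eta$ for all $x$ and all $u\in V$. Then $|\psi_\epsilon(x)-\phi(x)|\le \eta+2\|\phi\|_\infty\,\mathbb{P}(U_\epsilon\notin V)$, and the last probability tends to $0$ by weak convergence to $\delta_0$ (portmanteau, since $V$ contains an open set around $0$). Dominated convergence then gives $\mathbb{E}\phi(X+U_\epsilon)\to\mathbb{E}\phi(X)$.

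Finally, we apply the lower semicontinuity lemma with $\mu_n=P_{X+U_{\epsilon_n}}$ along an arbitrary sequence $\epsilon_n\to0$, and with the constant sequence $\nu_n=\mu$. This yields the required $\liminf$ bound, and combining it with the lower bound gives the limit. The case $h(X)=-\infty$ is covered as well: there the lower semicontinuity gives $D\to\infty$, i.e.\ $h(X+U_\epsilon)\to-\infty$.

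The only mildly delicate step is the uniform convergence $\psi_\epsilon\to\phi$, which needs uniform continuity of $\phi$ with respect to the group structure. This is standard on compact groups, but must be stated carefully since $G$ need not be a Lie group. The rest reduces directly to the two lemmas already recorded and the maximal EPI.
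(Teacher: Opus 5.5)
Your proposal is correct and follows essentially the same route as the paper. The lower bound $h(X+U_\epsilon)\ge h(X)$ is the same fact in both: you get it by Jensen on the mixture density, the paper by $h(X+U_\epsilon)\ge h(X+U_\epsilon\mid U_\epsilon)=h(X)$. The upper bound in both comes from $h(\cdot)=-D(\cdot\|U_G)$ together with lower semicontinuity of relative entropy under weak convergence. Your explicit check that $X+U_\epsilon\to X$ weakly, via uniform continuity of bounded continuous functions on the compact group, fills in a step the paper leaves implicit.
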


\noindent
{\sc Proof.}
    We always have 
    $$
    h(X+U_\epsilon) \geq h(X+U_{\epsilon}|U_{\epsilon}) = h(X).
    $$

    Letting $U_{G}$ be uniform on $G$, we have
    $$
    h(X) = -D(X\|U_{G}).
    $$
    Since $G$ is metrizable and compact, it is also Polish. Thus, lower semicontinuity of relative entropy with respect to weak convergence yields 
    $$
    \limsup_{\epsilon\to 0}h(X+U_{\epsilon}) = -\liminf_{\epsilon \to 0}D(X+U_{\epsilon}\|U_{G})\leq -D(X\|U_{G})= h(X),
    $$
    which completes the proof. 
\qed       

We note that Lemma~\ref{lemmacontinuity} is valid even when
$h(X) = -\infty$.

As in the case of discrete Shannon entropy~\cite{green:25}, it 
is convenient to work with the Ruzsa distance: 

\begin{definition}
    Let $G$ be a compact and metrizable topological abelian group, equipped with its Borel $\sigma$-algebra. Let $X,Y$ be $G$-valued random variables with $h(X),h(Y) > -\infty$. 
    The {\em Ruzsa distance} between $X$ and $Y$ is 
    $$
    \dR(X,Y) =  h(X'-Y') - \frac{1}{2}h(X)-\frac{1}{2}h(Y),
    $$
    where $X',Y'$ are independent and have the same marginals as $X$ and $Y$,
	respectively.
    
    The {\em maximal Ruzsa distance} is
    $$
    \dstar(X,Y) = \sup h(X'-Y') - \frac{1}{2}h(X)-\frac{1}{2}h(Y)
    $$
    where the supremum is over all couplings $(X',Y')$ of $(X,Y)$, i.e.,
all jointly distributed random variables $(X',Y')$
with same marginals as $X$ and $Y$.
\end{definition}

The following proposition generalizes the discrete-entropy
result of~\cite[Lemma~1.1]{green:25} to the present setting, 
but it requires a different proof method. 

\begin{proposition} \label{startriangle}
Let $G$ be a compact and metrizable topological abelian group, equipped with its Borel $\sigma$-algebra. Assume $X,Y,Z$ are $G$-valued random variables with finite entropies. Then
    $$
    \dR(X,Z) \leq \dstar(X,Z) \leq \dR(X,Y)+\dR(Y,Z).
    $$
\end{proposition}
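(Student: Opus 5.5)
The first inequality $\dR(X,Z)\le\dstar(X,Z)$ is immediate: the independent coupling is one of the couplings in the supremum defining $\dstar$. For the second inequality, fix an arbitrary coupling $(X',Z')$ of $(X,Z)$, and let $Y$ be independent of $(X',Z')$. Put
$$A:=X'-Y,\qquad B:=Y-Z',\qquad A+B=X'-Z'.$$
Since $Y$ is independent of $X'$ and of $Z'$ separately, $A$ has the law of $X-Y$ with independent summands, and $B$ has the law of $Y-Z$ with independent summands. Therefore
$$\dR(X,Y)+\dR(Y,Z)=h(A)+h(B)-h(Y)-\tfrac12 h(X)-\tfrac12 h(Z).$$
It thus suffices to show
$$h(A+B)\le h(A)+h(B)-h(Y)$$
for every coupling, and then take the supremum.

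The natural discrete route in~\cite{green:25} is submodularity with joint entropies such as $h(X',Z')$ and $h(A,B)$. That route is unavailable here, because the coupling may be singular and $(A,B)$ lives on a ``diagonal'' given $(X',Z')$, so these joint entropies are $-\infty$. I will therefore work only with one-dimensional conditional entropies and mutual informations. All conditional entropies below are defined through regular conditional distributions, which exist because $G$ is Polish. The argument has three steps.

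\textbf{Step 1 (conditional bound).} I write
$$h(A+B)=I(A;A+B)+h(A+B\mid A),$$
and then bound
$$h(A+B\mid A)=h(B\mid A)\le h(B).$$
Conditionally on $A=a$, the variable $A+B$ is a translate of $B\mid A=a$, and Haar measure is translation invariant; the final inequality holds because conditioning reduces entropy, i.e.\ $I(A;B)\ge 0$.

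\textbf{Step 2 (data processing).} The variable $A+B=X'-Z'$ is a measurable function of $(X',Z')$. By the data processing inequality,
$$I(A;A+B)\le I\bigl(A;(X',Z')\bigr).$$

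\textbf{Step 3 (evaluate the right side).} Given $(X',Z')=(x,z)$, the variable $A=x-Y$ is a reflected translate of $Y$, since $Y$ is independent of $(X',Z')$. Hence $h(A\mid X',Z')=h(Y)$ and
$$I\bigl(A;(X',Z')\bigr)=h(A)-h(Y).$$
Combining Steps 1 to 3 gives $h(A+B)\le h(A)-h(Y)+h(B)$, which is the required bound.

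The main obstacle is justifying the identities $I(U;V)=h(U)-h(U\mid V)$, and the chain rule used in Step 1, when $V$ has no density or the joint law is singular. I would check that they hold with $h(U\mid V)=\int h(U\mid V=v)\,dP_V(v)$ for $U$ with finite entropy, and not merely for jointly absolutely continuous pairs. This follows from the chain rule for relative entropy applied to $D(P_{U,V}\|P_U\times P_V)$ together with $h(U)=-D(U\|U_G)$.

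If a step fails in full generality, the fallback is to smooth. I would replace $Y$ by $Y+U_\epsilon$, with $U_\epsilon$ independent, absolutely continuous, and converging weakly to $0$; then, if needed, also perturb the coupling so that all relevant densities exist. One then proves the inequality for the smoothed variables and passes to the limit using Lemma~\ref{lemmacontinuity} and lower semicontinuity of relative entropy. The finiteness of $h(X),h(Y),h(Z)$ guarantees that no $\infty-\infty$ expressions arise.
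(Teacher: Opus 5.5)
Your proof is correct, and it takes a genuinely different route from the paper's.

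\textbf{The paper's route.} Following \cite{KM:14}, the paper conditions on $X$ and applies data processing as $I(X;(X-Y,Y-Z))\ge I(X;X-Z)$. It then expands both sides through the joint entropies $h(X-Y,Y-Z)$, $h(X,X-Y,Y-Z)=h(X,Z)+h(Y)$ and $h(X,Z)=h(X)+h(Z\mid X)$. These are $-\infty$ for singular couplings such as $Z=X$. So the paper first proves the inequality assuming all quantities are finite. It then smooths $X,Y,Z$ by independent uniforms on small neighbourhoods of $0$ and passes to the limit with Lemma~\ref{lemmacontinuity}, treating the finite-group case separately.

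\textbf{Your route.} You condition on $A=X'-Y$ instead, and apply data processing as $I(A;A+B)\le I(A;(X',Z'))$. Both routes reach the same intermediate bound $h(X'-Z')\le h(A)+h(B\mid A)-h(Y)$, which is the paper's $h(A,B)-h(Y)$ whenever the latter is defined. Both then finish with $h(B\mid A)\le h(B)$.

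\textbf{What each buys.} In your arrangement every identity has the form $I(U;V)=h(U)-h(U\mid V)$, with $U\in\{A,\,A+B\}$ a single $G$-valued variable of finite entropy. Your remaining step, $h(B\mid A)\le h(B)$, is just convexity of $D(\cdot\|\mu)$ (nonnegativity of $I(A;B)$). As you indicate, the identity follows from $D(P_{UV}\|\mu\times P_V)=I(U;V)+D(P_U\|\mu)$ without any joint density. So your argument needs no approximation at all, and the smoothing fallback you mention is unnecessary. The paper's version stays within the familiar joint-entropy calculus of the discrete setting, at the cost of that extra limiting step.

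\textbf{Two minor points of order.}
First, dispose of couplings with $h(X'-Z')=-\infty$ at the outset, since the bound is trivial for them and Step~1 needs $h(A+B)$ finite.
Second, use Steps~2 and~3 to obtain $I(A;A+B)\le h(A)-h(Y)<\infty$ before rearranging Step~1. This uses $h(A),h(B)\ge h(Y)>-\infty$, and it avoids an $\infty-\infty$ in $h(A+B)=I(A;A+B)+h(A+B\mid A)$.
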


\noindent
{\sc Proof.}
    The first inequality is obvious by the definitions. 
    
    For the second, it suffices to show that, if $Y$ is independent of $(X,Z)$ (with $X$ and $Z$ not necessarily independent) and $h(X-Z) > -\infty$ (since this is the case for at least the independent coupling of $(X,Z)$), then 
    \begin{equation}\label{toprovetr}
        h(X-Z) \leq h(X-Y) + h(Y-Z) -h(Y). 
    \end{equation}
We proceed similarly to the proof of~\cite[Theorem~3.1]{KM:14}. 
By the data processing property of mutual information we always have
\begin{equation} \label{dp}
    I(X;(X-Y,Y-Z)) \geq I(X;X-Z).
\end{equation}

We will first prove~\eqref{toprovetr}, assuming that all the involved quantities are finite, so that the left-hand side in~\eqref{dp} satisfies the identity
\begin{equation} \label{firstwithfinite}
    I(X;(X-Y,Y-Z)) = h(X) + h(X-Y,Y-Z) - h(X,X-Y,Y-Z),
\end{equation}
and the right-hand side satisfies 
\begin{equation} \label{secondwithfinite}
    I(X;X-Z) = h(X-Z) - h(Z|X).
\end{equation}
In this case, since the map $(x,y) \mapsto (x,x-y)$ is one-to-one, we have
\begin{align*}
h(Y-Z) - h(Y-Z|X,X-Y) &= I(Y-Z;(X,X-Y)) \\
&= I(Y-Z;(X,Y))\\
&= h(Y-Z) - h(Y-Z|X,Y),
\end{align*}
and thus 
$$
h(Y-Z|X,X-Y)=h(Y-Z|X,Y).
$$
Therefore, 
\begin{align*}
h(X,X-Y,Y-Z) &= h(X)+h(X-Y|X)+h(Y-Z|X,X-Y)\\
& = h(X) + h(Y)+h(Y-Z|X,Y) \\
&= h(X,Y) +h(Z|X,Y)\\
& = h(X,Y,Z)\\
& = h(X,Z) + h(Y),
\end{align*}
where we used the chain rule for entropy twice, translation invariance,
and the independence of $Y$ and $(X,Z)$.

Putting these together and rearranging~\eqref{dp} gives 
$$
h(X) + h(X-Y,Y-Z) - h(X,Z) - h(Y) \geq h(X-Z) - h(Z|X).
$$
Now~\eqref{toprovetr} follows by $h(X-Y,Y-Z) \leq h(X-Y)+h(Y-Z)$ and the chain rule for $h(X,Z)$.

In the general case, we may assume that $\{0\}$ is not open as otherwise the 
group is discrete and finite and thus entropy is at least 
$\log{\mu(\{0\})}$ (and in particular finite). Then, 
apply~\eqref{toprovetr} to $\tilde{X} = X + U_{\epsilon}, \tilde{Y} 
= Y + U'_{\epsilon},\tilde{Z} = Z + U''_{\epsilon}$, 
where $U_{\epsilon},U_{\epsilon}',U_{\epsilon}''$ are independent 
uniforms on a neighborhood of $0$ of Haar measure at most $\epsilon$. 
Such a neighborhood exists by outer regularity of the Haar measure. 
It is then easy to check that all the quantities 
in~\eqref{firstwithfinite} and~\eqref{secondwithfinite} are finite. 
Taking $\epsilon \to 0$ and using Lemma~\ref{lemmacontinuity} 
completes the proof. 
\qed       

The sum-submodularity inequality for 
discrete Shannon entropy~\cite{KV:83} also holds on $G$~\cite{KM:14,KM:16}: 

\begin{proposition} \label{submodularity}
Let $G$ be an abelian, compact topological group, equipped with its Borel $\sigma$-algebra. Let $X,Y,Z$ be independent $G$-random variables. Then
\be
    h(X+Y+Z) +h(Y) \leq h(X+Y)+h(Y+Z).
\label{eq:sumsub}
\ee 
\end{proposition}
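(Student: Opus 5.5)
Following the approach used for Proposition~\ref{startriangle}, I will first prove the inequality when every entropy involved is finite, and then remove this assumption by smoothing and taking limits. In the finite case the argument is the usual one through data processing. Because $X,Y,Z$ are independent, the pair $(X+Y,Z)$ contains the same information as $(X+Y+Z,Z)$: the map $(a,z)\mapsto(a+z,z)$ is a bijection. The first step is to establish
\[
I(X+Y+Z;Z)\ \geq\ I(Y+Z;Z).
\]
This holds because $Y+Z$ is obtained from $(X+Y+Z,\,-X)$ by a measurable map, and $X$ is independent of the pair $(X+Y+Z,Z)$ once we condition suitably. A cleaner way to organise this is to write $I(X+Y+Z;Z)=h(X+Y+Z)-h(X+Y)$ and $I(Y+Z;Z)=h(Y+Z)-h(Y)$, which are translation-invariance identities valid when the entropies are finite, since $h(X+Y+Z\mid Z)=h(X+Y)$ and $h(Y+Z\mid Z)=h(Y)$. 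The claim then reduces to the Markov chain $Z \to Y+Z \to X+Y+Z$. This chain holds because $X+Y+Z=(Y+Z)+X$ and $X$ is independent of $(Y,Z)$. The data processing inequality for Markov chains gives $I(Z;X+Y+Z)\le I(Z;Y+Z)$. Rearranging yields
\[
h(X+Y+Z)-h(X+Y)\ \le\ h(Y+Z)-h(Y),
\]
which is exactly \eqref{eq:sumsub}. I will derive the Markov-chain form of data processing from the Data processing inequality lemma stated earlier via the chain rule for mutual information: $I(Z;(Y+Z,X+Y+Z))=I(Z;Y+Z)$ because $X$ is independent of $(Y,Z)$, and this quantity is at least $I(Z;X+Y+Z)$ by the lemma.

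For the general case there are two situations. If $\{0\}$ is open, then $G$ is finite and every entropy is at least $\log\mu(\{0\})$, so all quantities are finite and nothing more is needed. Otherwise there are further sub-cases. If $h(Y)=-\infty$, the inequality is trivial, since the left side is $-\infty$ or the right side is $\geq$ it; some care is needed here with the $-\infty$ conventions. In the remaining sub-case I will replace $X,Y,Z$ by $X+U_\epsilon$, $Y+U'_\epsilon$, $Z+U''_\epsilon$, with the $U$'s independent and uniform on small neighbourhoods of $0$, exactly as in the proof of Proposition~\ref{startriangle}. These smoothed variables have bounded densities, so every entropy in the identities above is finite, and independence is preserved. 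The smoothed inequality involves sums such as $X+Y+U_\epsilon+U'_\epsilon$. Since $U_\epsilon+U'_\epsilon\to0$ weakly, Lemma~\ref{lemmacontinuity} lets me pass to the limit in each of the four terms.

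The main obstacle is the bookkeeping of infinite values in the identities $I(Z;X+Y+Z)=h(X+Y+Z)-h(X+Y)$. I avoid it by doing the whole argument for the smoothed variables and only invoking Lemma~\ref{lemmacontinuity} at the end. That lemma holds even when the limiting entropy is $-\infty$, so the limiting inequality follows in every case where the right-hand side is not of the form $\infty-\infty$; since all entropies are at most $0$, that form cannot occur.
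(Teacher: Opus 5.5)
Your proof is correct and is essentially the paper's: the paper rearranges \eqref{eq:sumsub} into $I(X+Y+Z;X)\le I(X+Y;X)$ and cites data processing plus independence, which is the mirror image (with $X$ and $Z$ swapped) of your Markov chain $Z\to Y+Z\to X+Y+Z$. Your first displayed claim $I(X+Y+Z;Z)\geq I(Y+Z;Z)$ has the wrong direction, and its justification (that $X$ is independent of $(X+Y+Z,Z)$) is false; delete it in favour of the inequality $I(Z;X+Y+Z)\le I(Z;Y+Z)$ that you derive next. The smoothing step is unnecessary: once $h(Y)>-\infty$, the bounds $h(X+Y),\,h(Y+Z),\,h(X+Y+Z)\geq h(Y)$ already make all four terms finite, which is how the paper concludes directly (smoothing only buys the minor convenience that all joint laws have densities, so the identities $I=h-h(\cdot\,|\,\cdot)$ apply verbatim as stated in the preliminaries).
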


\noindent
{\sc Proof.}
If $h(Y) = -\infty$, there is nothing to prove. So we may assume that $h(Y) >-\infty$ and, in particular, that $Y$ has a density. Note then that all four
terms in~\eqref{eq:sumsub} are finite. 

Rearranging, the desired inequality is 
$$
I(X+Y+Z;X) \leq I(X+Y;X),
$$
but this follows from the data processing inequality 
for mutual information and independence. 
\qed

The following result is due to 
Kneser~\cite[Satz~1]{kneser:56}. It is worth noting that Kneser's 
theorem unifies the Brunn--Minkowski inequality in dimension one and 
the Cauchy--Davenport inequality. For us, it will be used as an 
intermediate step to show that a set $S$ for which $\mu(S-S)/\mu(S)$ 
is close to $1$ has either almost full or almost zero measure. An inverse 
theorem to Kneser's was obtained by Tao~\cite{tao:18} in 
connected compact abelian groups, showing that near-equality
in the lower abound 
implies that the sets are close to Bohr sets. A related functional 
inequality was studied in~\cite{christ:22}.

\begin{lemma}[Kneser]
Let $A,B$ be subsets of a locally compact, abelian group $G$ and let $\mu$ 
denote (a version of) the Haar measure on $G$. If there exists an open 
subgroup $H$ such that 
$$
A+B+H=A+B \quad\text{and}\quad \mu(A+B+H) = \mu(A+H) + \mu(B+H)-\mu(H),
$$
then 
$$
\mu(A+B) \geq \mu(A)+\mu(B)-\mu(H).
$$
Otherwise, 
$$
\mu^*(A+B) \geq \mu(A)+\mu(B),
$$
where $\mu^*$ denotes the inner measure induced by $\mu$.
\end{lemma}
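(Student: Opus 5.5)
The plan is to follow Kneser's original strategy: reduce to compact sets, then run a Dyson $e$-transform argument together with a study of the period (stabilizer) of the sumset. The measure-theoretic complications are handled by compactness and regularity of Haar measure.

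\emph{Step 1 (reduction to compact sets).} Since $\mu$ is inner regular, for every $\varepsilon>0$ I choose compact $K\subseteq A$ and $L\subseteq B$ with $\mu(K)\ge\mu(A)-\varepsilon$ and $\mu(L)\ge \mu(B)-\varepsilon$. Then $K+L$ is a compact subset of $A+B$, so $\mu^*(A+B)\ge \mu(K+L)$, and it is enough to prove the statement for compact pairs.

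The subtle point is the dichotomy. The period $H(K+L)=\{g: K+L+g=K+L\}$ is a closed subgroup that depends on $K$ and $L$. So I would first prove the following compact version, with $H=H(K+L)$:
$$\mu(K+L)\ge \mu(K+H)+\mu(L+H)-\mu(H)\quad\text{if } H \text{ is open},\qquad \mu(K+L)\ge\mu(K)+\mu(L)\quad\text{otherwise}.$$
I then pass to the limit along an increasing sequence of compact approximants. Either infinitely many of them have non-open period, which gives $\mu^*(A+B)\ge\mu(A)+\mu(B)$. Or, after enlarging the approximants, the open periods stabilize to some open $H$. In that case $A+B+H=A+B$ can be arranged, and the equality condition in the hypothesis is exactly what separates the two conclusions.

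Within the compact case I also reduce to a compactly generated group. The set $K\cup L$ lies in an open, compactly generated subgroup $G_0$. By the structure theorem, $G_0\cong \mathbb R^n\times\mathbb Z^m\times C$ with $C$ compact. This gives concrete control over which closed subgroups can occur as periods.

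\emph{Step 2 (Dyson transform and minimal counterexample).} For $e\in K-L$, the $e$-transform is
$$K_e=K\cup(L+e),\qquad L_e=L\cap(K-e).$$
It satisfies $K_e+L_e\subseteq K+L$ and $\mu(K_e)+\mu(L_e)=\mu(K)+\mu(L)$, and it keeps both sets compact and nonempty. Suppose $(K,L)$ violates the compact statement. Consider the family of all pairs obtained from it by iterated transforms, together with translates and limits in the Hausdorff metric on compact sets. Passing to limits is legitimate because $\mu$ is upper semicontinuous on compact sets under Hausdorff convergence, and the sumset inclusion survives the limit. In this family I choose a pair $(K',L')$ minimizing $\mu(L')$, or more precisely minimizing $\mu(L'+H')$ at the level of periods.

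Minimality forces $L'_e$ to be trivial for every $e$, which yields $K'+L'-L'\subseteq K'$. Hence $K'$ is a union of cosets of the closed subgroup generated by $L'-L'$. From there I argue as in Kneser's finite proof: either the new period is open and the subgroup-counting bound follows, or it is not open and we have $\mu$-additivity modulo a null subgroup. The contradiction is then propagated back to $(K,L)$ by the standard induction on $\mu(L+H)$, which terminates by compactness.

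\emph{Main obstacle.} The hard step is the extremal argument in Step 2 in the non-discrete setting. Two things can go wrong when taking limits of transforms: measure may drop, and the period may jump. To control both, I would first treat the case where $G_0$ is a Lie group, namely $\mathbb R^n\times\mathbb Z^m\times\mathbb T^k\times F$. There closed subgroups are explicit, and Kemperman's continuous version of the $e$-transform argument applies. I would then reach general $G_0$ by quotienting by small compact subgroups $N$. Replacing $K,L$ by $K+N$ and $L+N$ changes the measures by arbitrarily little, by regularity. Letting $N$ shrink along the neighbourhood base recovers the general statement.
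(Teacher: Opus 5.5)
The paper gives no proof of this lemma (it is quoted from Kneser's 1956 paper, Satz~1), so your proposal has to stand on its own. Its central Step~2 does not.

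First, you pick a pair minimizing $\mu(L')$ in a family closed under Hausdorff limits. Haar measure is only \emph{upper} semicontinuous under Hausdorff convergence (finite sets can converge to an arc), which is the wrong direction for a minimum to be attained. This is repairable. In a compact group, maximize the upper semicontinuous functional $\mu(K')$ over the compact family $\mathcal{F}$ of pairs of nonempty compact sets with $K'+L'\subseteq K+L$ and $\mu(K')+\mu(L')\geq\mu(K)+\mu(L)$. Every $e$-transform of a maximizer $(K^*,L^*)$ lies in $\mathcal{F}$, so $\mu((L^*+e)\setminus K^*)=0$. Letting $L^\circ$ be the support of $\mu|_{L^*}$, one gets $K^*+L^\circ-L^\circ\subseteq K^*$.

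The real gap is what comes next. If $\mu(K+L)<\mu(K)+\mu(L)$, every pair in $\mathcal{F}$ has $\mu(L')\geq\mu(K)+\mu(L)-\mu(K+L)>0$. By Steinhaus, the subgroup $M$ generated by $L^\circ-L^\circ$ is therefore open. So your branch ``the period is not open and we have additivity modulo a null subgroup'' never occurs. The remaining branch gives no contradiction: $K^*$ is a union of $M$-cosets, and $(K^*,L^*)$ simply satisfies Kneser's bound with an open period. Its sumset can be a proper subset of $K+L$, and its period says nothing about the period of $K+L$. Carrying the stabilizer of a transformed sumset back to the original sumset is precisely the content of Kneser's theorem. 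In the discrete proof this is done by strong induction on $|B|$ plus a case analysis. ``Induction on $\mu(L+H)$, which terminates by compactness'' is not available, since a real parameter is not well-ordered and the subgroups $H$ change along the way.

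Your fallback does not close this gap. Kemperman's inequality $\mu_*(A+B)\geq\min\{\mu_*(A)+\mu_*(B),\mu(G)\}$ holds for connected groups and fails without connectedness. Take $A=B=\{0\}\times\mathbb{T}$ in $\mathbb{Z}/2\times\mathbb{T}$: then $\mu(A+B)=1/2<1$. Now $\mathbb{R}^n\times\mathbb{Z}^m\times\mathbb{T}^k\times F$ is disconnected as soon as $m>0$ or $F\neq\{0\}$, which is exactly when the first alternative of the lemma occurs. The missing argument is the combination of the connected inequality on the open identity component with the discrete Kneser theorem on the component group.

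A less serious point: in Step~1 the open periods need not stabilize. For example, $(\mathbb{Z}/2)^{\mathbb{N}}$ has infinitely many open subgroups of index~$2$. You need a compactness argument, e.g.\ that a Hausdorff limit of subgroups of measure at least $c>0$ is again an open subgroup.

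On the other hand, the repaired Step~2 already gives everything the paper actually uses. The lemma is applied only in two cases. One is connected compact $G$, in the form $\mu(S-S)\geq\min\{2\mu(S),1\}$ for closed $S$. The other is $\mathbb{Z}/p$, where it is Cauchy--Davenport. In a connected group the open subgroup $M$ must equal $G$, so $K^*=G$ and $\mu(K+L)\geq\mu(K^*+L^*)=1$. Hence $\mu(K+L)\geq\min\{\mu(K)+\mu(L),1\}$ for compact $K,L$, with no need for the structure theorem or the Lie reduction. What your sketch leaves unproved is the general locally compact statement, where proper open subgroups are present.
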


\subsection{Proofs} 
\label{proof2sec}

\noindent
{\sc Proof of Theorem~\ref{equalityTh}.}
The inequality follows by positivity of mutual information 
since $h(X+Y) \geq h(X+Y|Y) = h(X)$ and by symmetry $h(X+Y)\geq h(Y)$.

If $X,Y$ are both uniformly distributed on $G$ then their sum 
and difference are also uniform on $G$ and thus $\dR(X,-Y) = \dR(X,Y) = 0$. 
If $\{0\}$ is open, then every singleton is open since the translation map 
is continuous. Hence, every set is open (as a union of singletons), which 
means the topology is discrete. Since the group is also compact it must 
be finite. Therefore, if the random variables are deterministic all 
entropies in the statement
equal $\log \mu(\{0\})$.  

Conversely, assume $\dR(X,-Y) = 0.$ By the triangle inequality for $\dR$,
    $$
    \dR(X,X),\dR(Y,Y) \leq 2\dR(X,-Y)=0.
    $$
   That is, $h(X - X') - h(X) = h(X - X') - h(X - X'|X') = I(X';X-X') = 0$, which implies that $X'$ is independent of $X - X'$.
Thus, the distribution of $X - X'$ conditioned on $\{X' = x\}$ is independent of $x$, i.e., $X - x \stackrel{d}{=} X - x'$ for all $x,x'$ in $\mathrm{supp}(X)$. Equivalently, $X \stackrel{d}{=} X + x$
for all $x \in \mathrm{supp}(X) - \mathrm{supp}(X)$.  In particular,
$\mathrm{supp}(X) =  \mathrm{supp}(X) + x$ for all $x \in \mathrm{supp}(X) - \mathrm{supp}(X)$, whence we see that if $s, x \in \mathrm{supp}(X)-\mathrm{supp}(X)$ then $s+x \in \mathrm{supp}(X)-\mathrm{supp}(X)$. Since $\mathrm{supp}(X)-\mathrm{supp}(X)$ also contains $0$ and inverses, it is a subgroup
and $\mathrm{supp}(X)$ is a coset of this subgroup, while $X$ is uniformly distributed on this coset.
Since $X$ is absolutely continuous with respect to Haar measure, $\mathrm{supp}(X)$ and $\mathrm{supp}(X) - \mathrm{supp}(X)$ have positive Haar measure.  Thus $\mathrm{supp}(X) - \mathrm{supp}(X)$ must be open since it must contain a neighborhood of the identity and is a union of translates of that neighborhood~\cite[Corollary~20.17]{hewitt:bookI}.   

   If $\{0\}$ is not open, then the only open subgroup is $G$ itself and thus $X$ is uniform on $G$.
   Otherwise, $X$ is either uniform on $G$ or on a coset of $\{0\}$, 
i.e., deterministic. 

The same conclusion must hold for $Y$.
\qed       
The following key proposition is analogous to and 
generalizes the discrete entropy result 
of~\cite[Proposition~1.2]{green:25} to the present setting.
For $p \in [0,1],$ let $H_2(p) = -p\log{p}-(1-p)\log{(1-p)}$ denote the 
binary entropy function.

\begin{proposition}\label{revisitedprop}
Let $G$ be a compact and metrizable topological abelian group
equipped with its Borel $\sigma$-algebra, and let $\mu$ denote the Haar 
probability measure on $G$. 
For any $G$-valued random variables $X, Y$ with finite entropies 
there is a non-empty, closed subset $S$ of $G$ of positive Haar 
measure such that, if $U_S$ is uniformly distributed on $S$, then,
for any constant $C\geq 4$,
\begin{equation} 
\dR(U_S, Y) \leq (C+2)\dR(X, Y) + H_2(1 - 2/C), \label{eq:1.5}
\end{equation}
and
\begin{equation}
\log \frac{\mu(S-S)}{\mu(S)} \leq (2C+4)\dR(X, Y) + 2H_2(1 - 2/C).\label{eq:1.6}
\end{equation}
\end{proposition}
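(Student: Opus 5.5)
Following the strategy of Green--Manners--Tao~\cite[Proposition~1.2]{green:25}, I plan to take $S$ to be a superlevel set of a convolution density. Write $d=\dR(X,Y)$ and let $X,X',Y,Y'$ be independent, with $X'\stackrel{d}{=}X$ and $Y'\stackrel{d}{=}Y$. Proposition~\ref{startriangle} gives $\dR(X,X)\le 2d$ and $\dR(Y,Y)\le 2d$.

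Since $X$ and $Y$ have finite entropies, $Z=X-Y$ has a density $f=f_X*f_{-Y}$, and $h(Z)=\frac12h(X)+\frac12h(Y)+d$. The natural candidate is
$$S=\{z\in G: f(z)\ge t\},$$
or a closed version of it. Here the threshold $t$ is tied to $e^{-h(Z)}$ and to $C$. The key quantity is a ``popular sums'' estimate: the event $E=\{Z\in S\}$ should have probability at least $1-2/C$. The first step is therefore to control the fluctuation of $-\log f(Z)$ around $h(Z)$ via a Markov-type argument. To get this, compare $h(Z)$ with the conditional entropy $h(Z\mid X)=h(Y)$, and use $h(Z)-h(Y)\le 2d$ along with its mirror $h(Z)-h(X)\le 2d$; these bounds follow from the EPI-type estimates and $\dR(X,X),\dR(Y,Y)\le 2d$.

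The next step is to compute with the conditioned pair. Conditioning on $E$ gives a random variable $\tilde Z$ with $h(\tilde Z)\ge \frac{h(Z)-H_2(\mathbb P(E))}{\mathbb P(E)}$-type bounds. The binary entropy term $H_2(1-2/C)$ arises from the chain rule on the indicator of $E$. Using $\log f\ge \log t$ on $S$ and $\log f\le$ (stuff) off $S$, I then compare $U_S$ with the law of $\tilde Z$. Indeed, $\log\mu(S)\ge h(\tilde Z)$ since $\tilde Z$ is supported on $S$, and $\log \mu(S)\le -\log t$ since $\int f\le 1$ gives $t\mu(S)\le 1$. Together these pin $\log\mu(S)$ to within $O(Cd)+H_2$ of $h(Z)$.

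For \eqref{eq:1.5}, I would bound $h(U_S-Y')$ using submodularity (Proposition~\ref{submodularity}) and the triangle inequality in the form~\eqref{toprovetr}. Since $\tilde Z$ is a function of $(X,Y)$ restricted to $E$, I would write $h(U_S - Y)$ in terms of $h(\tilde Z-Y')$, which is at most $h(X-Y-Y')$ plus conditioning losses. This step uses Jensen: $h(U_S-Y)\le$ something like $-\log t+\ldots$, obtained by noting that $f_{U_S-Y}=\mu(S)^{-1}\mathbb 1_S*f_{-Y}\le \mu(S)^{-1}t^{-1}(f\,\mathbb 1_S)*f_{-Y}$. For \eqref{eq:1.6}, the difference set $S-S$ is handled by the same density bound with $\mathbb 1_S*\mathbb 1_{-S}$, which gives roughly twice the constants.

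The main obstacle is the Markov/threshold step: obtaining $\mathbb P(f(Z)\ge t)\ge 1-2/C$ with the stated dependence on $C$, since the discrete GMT argument uses $\mathbb P(Z=z)\le1$, which fails here. I would try thresholding instead the conditional density $f(Z)/f_{Z\mid X}$ and use $\mathbb E\log$ of it $=I(Z;X)\le 2d$, applying Markov to the positive part. Closedness and positive measure of $S$ would then be arranged by mollifying with $U_\epsilon$ as in the proof of Proposition~\ref{startriangle} to make $f$ continuous, and passing to the limit with Lemma~\ref{lemmacontinuity}.
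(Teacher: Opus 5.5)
\textbf{There is a genuine gap.} Taking $S$ to be a superlevel set of $f=f_{X-Y}$ does not lead to any of the three estimates you need, and each step you sketch fails as stated.

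(i) The Markov step. $-\log f(Z)$ need not be bounded below, so its mean $h(Z)$ gives no control of the upper tail. Your fallback ratio $L=f_{Z\mid X}(Z\mid X)/f(Z)$ has $\mathbb E\log L=I(X;Z)\le 2d$, but $\log L$ is not nonnegative. Its positive part only has mean at most $2d+e^{-1}$, so Markov at level $2/C$ forces a threshold of order $C(d+e^{-1})$, which does not vanish as $d\to 0$. Also $f(Z)=f_Y(Y)/L$, so bounding $L$ says nothing about $\{f(Z)\ge t\}$ unless you also control the fluctuations of $-\log f_Y(Y)$.

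(ii) Pinning $\log\mu(S)$. The bound $h(\tilde Z)\ge (h(Z)-H_2(p))/p$ loses $\frac{1-p}{p}|h(Z)|$. This depends on the scale of $h(Z)$ and is not $O(Cd)$. Since $E=\{X-Y\in S\}$ depends on $Y$, you cannot use independence to compensate for the complement.

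(iii) Bounding $h(U_S-Y)$. A pointwise domination $f_{U_S-Y}\le (t\mu(S))^{-1}(f\mathbb{I}_S)*f_{-Y}$ bounds a relative entropy, so it gives a \emph{lower} bound on $h(U_S-Y)$. Turning it into an upper bound via a mixture decomposition and concavity costs a multiplicative factor $1/(t\mu(S))$, not the additive $(C+2)d$.

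(iv) The bound \eqref{eq:1.6}. $\mathbb{I}_S*\mathbb{I}_{-S}$ is the density of the independent difference $U_S-U_S'$. Since $h(U_S-U_S')\le\log\mu(S-S)$, no upper bound on this entropy controls $\mu(S-S)$; the inequality points the wrong way.

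(v) Mollification. Mollifying makes $S$ depend on $\epsilon$, and you give no way to pass to a limiting set.

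\textbf{The missing idea} is to threshold in the $X$-variable rather than in $X-Y$. This is what Green--Manners--Tao actually do, and their argument never uses $\mathbb P(Z=z)\le 1$.

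Set $S=\{x: D(x-Y\|X-Y)\le Cd\}$. The identity $\int f_X(x)D(x-Y\|X-Y)\,dx=h(X-Y)-h(Y)\le 2d$ has a nonnegative integrand. So Markov gives $p=\mathbb P(X\in S)\ge 1-2/C$ directly. Moreover $S$ is closed by lower semicontinuity of relative entropy, so no mollification is needed.

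For any $W$ supported on $S$ and independent of $Y$, the compensation identity gives
$h(Y-W)-h(Y)=I(W;W-Y)\le\int D(w-Y\|X-Y)\,dP_W(w)\le Cd$.
Here replacing the law of $W-Y$ by that of $X-Y$ can only increase the averaged divergence.

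Because $\{X\in S\}$ is independent of $Y$, you have $h(X-Y\mid X\notin S)\ge\frac12[h(Y)+h(X\mid X\notin S)]$. Combining this with the chain rule for $h(X)$ over $\mathbb{I}_S(X)$ gives $\log\mu(S)\ge h(X\mid X\in S)\ge h(Y)-\frac2p d-\frac1p H_2(p)$. Taking $W=U_S$ then yields \eqref{eq:1.5}.

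For \eqref{eq:1.6}, use a measurable selection to build a \emph{coupled} pair $(W,W')\in S\times S$ with $W-W'$ uniform on $S-S$. Then apply \eqref{toprovetr}, which allows $W$ and $W'$ to be dependent:
$\log\mu(S-S)=h(W-W')\le h(W-Y)+h(Y-W')-h(Y)\le h(Y)+2Cd$.
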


\noindent
{\sc Proof.}
We proceed as in the proof of~\cite[Proposition~1.2]{green:25} 
and justify the corresponding steps differently when necessary. 

Suppose $X, Y$ are independent and write $k = \dR(X, Y)$ for convenience. 
Let $C\geq 4$, arbitrary.
We have
\begin{equation*}
h(X-Y) = \frac{1}{2}h(X) + \frac{1}{2}h(Y) + k \leq \frac{1}{2}h(X-Y) 
+ \frac{1}{2}h(Y) + k, 
\end{equation*}
and hence $h(X-Y) - h(Y) \leq 2k$. By the translation invariance 
of entropy, we have
\begin{align}
h(X-Y) - h(Y) &= \int_{G} \int_{G} f_X(x) f_{x-Y}(t)\log\frac{1}{f_{X-Y}(t)} - h(x-Y)\,dx \,dt \nonumber\\
&=\int_{G} f_X(x) D(x-Y \| X-Y)\,dx \label{fDx}\\
& \leq 2k. \label{eq:3.4}
\end{align}
Define
\begin{equation*}
S := \{x \in G : D(x-Y \| X-Y) \leq Ck\}. \label{eq:3.5}
\end{equation*}
Observe that $S$ is closed by the lower-semicontinuity of relative entropy. Let $p := \mathbb{P}(X \in S) = \mathbb{P}(\mathbb{I}_S(X) = 1)$. By Markov's inequality 
and~\eqref{eq:3.4}, it follows that
\begin{equation*}
p = \mathbb{P}(X \in S) \geq 1 - 2/C \geq 1/2. 
\end{equation*}
Note first that, since 
\begin{align*}
    I(X;\mathbb{I}_S(X)) &= H_2(p) \\
    &= h(X) - h(X|\mathbb{I}_S(X)) \\
    &= h(X) - ph(X|X\in S) - (1-p)h(X|X\notin S),
\end{align*}
we have 
\begin{equation} \label{hExpand}
    h(X) = ph(X|X\in S) + (1-p)h(X|X \notin S) + H_2(p).
\end{equation}

Also, since $Y$ is independent of $X$ and $\mathbb{I}_S(X)$, it follows that 
$$h(X-Y|\IS=i) \geq h(Y)\quad\mbox{and}\quad h(X-Y|\IS=i) \geq h(X|\IS=i),$$ 
for $i=0, 1$. Therefore,
\begin{align*}
h(X-Y) &\geq h(X-Y|\IS)\\
& = ph(X-Y|X\in S) + (1-p)h(X-Y|X \notin S) \\
&\geq ph(Y) + \frac{(1-p)}{2}[h(Y)+h(X|X\notin S)]. 
\end{align*}
Combining with~\eqref{hExpand} and using the definition of $k,$ we obtain 
\begin{equation*}
2k \geq p [h(Y) -  h(X|X\in S)] - H_2(p).  
\end{equation*}
Let $V$ be a random variable distributed as $X$ conditioned on $\{X\in S\}$. 
Since
$0\leq D(V\|U_S) = \log\mu(S)-h(X|X\in S),$ we have 
\begin{align}
    \log\mu(S) &\geq  h(Y) - \frac{2}{p}k - \frac{1}{p} H_2(p) \label{volSbound} \\
    &\geq h(Y) -4k - 2H_2\Big(1-\frac{2}{C}\Big), \label{cardSH}
\end{align}
where we used the monotonicity of $H_2(\cdot)$ 
and the fact that $C \geq 4.$

Now we claim that for any random variable $Z$ 
taking values in $S$ and having a density with respect to $\mu$,
\begin{equation} \label{foreveryz}
    h(Y-Z) - h(Y) \leq Ck.
\end{equation}
Combined with~\eqref{volSbound},~\eqref{foreveryz} implies~\eqref{eq:1.5} 
upon taking $Z = U_S$.

To prove the claim, assume $Y,Z$ are independent and note that,
similarly to~\eqref{fDx}, 
we have for any $Z \in S$ that has a density with respect to $\mu$,
\begin{equation} \label{Dwelldef}
h(Y-Z) - h(Y) \leq \int_Sf_Z(z)D(z-Y\|X-Y)\,dz.
\end{equation}
By the definition of $S$, the right-hand side of~\eqref{Dwelldef} 
is at most $Ck$ (and, in particular, finite).

Finally, we establish~\eqref{eq:1.6}. We may define $(Z, Z')$ on $S\times S$, so that $Z-Z'$ is uniform on $S-S$, which is measurable since $S$ is closed. Indeed, since $S-S$ is in fact closed 
and thus a compact subset of $G$, the Kuratowski and Ryll-Nardzewski 
selection theorem~\cite{kuratowski:65} 
(see also~\cite[Theorem~5.2.1]{srivastava:book}) guarantees that there 
exists a measurable selection $F:S-S\to S\times S$. Letting $D$ be 
uniform on $S-S$ we may thus define $(Z,Z') = F(D)$ so that $Z-Z'=D.$ 
Hence, using~\eqref{cardSH} and~\eqref{toprovetr}, 
\begin{align*} 
\log \frac{\mu(S-S)}{\mu(S)} 
&= h(Z-Z') - \log{\mu(S)} \\
&\leq h(Z-Z') -h(Y) +4k+2H_2\Big(1-\frac{2}{C}\Big)\\
&\leq h(Z-Y)+h(Y-Z')-2h(Y)  +4k+2H_2\Big(1-\frac{2}{C}\Big) \\
&\leq 2Ck  +4k+2H_2\Big(1-\frac{2}{C}\Big),
\end{align*}
where the last inequality follows by~\eqref{foreveryz}. Therefore, 
$$\log 
\frac{\mu(S-S)}{\mu(S)} \leq (2C+4)k+2H_2\Big(1-\frac{2}{C}\Big),$$
which is~\eqref{eq:1.6}.
\qed       

Equipped with Proposition~\ref{revisitedprop}, we are ready to prove the stability estimate of Theorem~\ref{stabilityTh}: 

\medskip

\noindent
{\sc Proof of Theorem~\ref{stabilityTh}.}
Assume $X,Y$ are independent and write $k:= h(X-Y)-\frac12h(X)-\frac12h(Y)$ 
as before. Let $C\geq 4$ be a constant depending on $k$ which will be
chosen later. We apply 
Proposition~\ref{revisitedprop} to obtain a closed set $S$ such that 
    \begin{equation*}
    \dR(U_S,Y) \leq (C+2)k + H_{2}\Big(1-\frac{2}{C}\Big),
    \end{equation*}
    and
    \begin{equation}\label{proofSbounds2}
        \log{\frac{\mu(S-S)}{\mu(S)}} \leq (2C+4)k+2H_2\Big(1-\frac{2}{C}\Big).
    \end{equation}
    Under our assumptions, there are no open subgroups except $G$ and 
possibly $\{0\}$. Also $S$ is closed and thus $S-S$ is measurable. Thus, 
if the group is connected, or equivalently $\{0\}$ is not open, 
Kneser's theorem gives  
    \begin{equation} \label{knesertorus2}
        \mu(S-S) \geq \min\bigl\{2\mu(S),1\},
    \end{equation}
    and 
    if $\{0\}$ is open, equivalently if the group is finite.
     \begin{equation} \label{knesertorus0open}
        \mu(S-S) \geq \min\bigl\{2\mu(S)-\mu(\{0\}),1\}.
    \end{equation}

    Consider first the (non-discrete) case where $\{0\}$ is not open. 
    Then $\mu(S) < 1/2$ is impossible if $k$ is small enough, since then we get from~\eqref{proofSbounds2} and~\eqref{knesertorus2}
    \begin{equation*} 
    k \geq \frac{\log2 - 2H_2(1-\frac{2}{C})}{2C+4},
    \end{equation*}
    and optimizing in $C\geq 4$ we obtain the desired contradiction. 
    Therefore, we assume $\mu(S) \geq 1/2. $ Then by~\eqref{proofSbounds2} 
and~\eqref{knesertorus2} again,
    \begin{equation} \label{muSgeqe2}
    \mu(S) \geq e^{-(2C+4)k-2H_2(1-\frac{2}{C})}.
    \end{equation}
    Since $U_{G}-U_S \sim\mathrm{Uni}(G),$
    $$
    \dR(U_S,U_{G}) =  \frac{1}{2}(h(U_{G})-h(U_S)) = -\frac{1}{2}h(U_S).
    $$
    Furthermore, since $Y-U_G \sim \mathrm{Uni}(G),$
    $$
    D(Y\|U_{G}) = -h(Y) = 2\dR(Y,U_{G}).
    $$
    Thus, 
    \begin{align}
        D(Y\|U_{G}) &= 2\dR(Y,U_{G}) \\
        &\leq2\dR(Y,U_{S}) + 2\dR(U_S, U_{G}) \\ \label{steps}
        &\leq (4C+8)k + 4H_2\Big(1-\frac{2}{C}\Big).
    \end{align}
    Choosing $C = \frac{1}{\sqrt{k}}$ and assuming $k \leq \frac{1}{16}$ so that $C\geq 4$, we obtain the claimed result since 
$H_2(1-x) = O(x\log x)$ as $x\to 0.$

    It remains to prove the claim in the case when $\{0\}$ is open. 
Note that in that case the group is discrete (every set is open and since 
the group is compact it is finite), $\mu$ is the normalized counting 
measure and the entropy lies in $[\log \mu(\{0\}),0]$. Then,
if $\mu(S) \geq \frac{1+\mu(\{0\})}{2}$,~\eqref{knesertorus0open} gives 
$\mu(S-S)\geq 1$ and we repeat the steps~\eqref{muSgeqe2}--\eqref{steps} 
with $C=1/\sqrt{k}$ to obtain $D(Y\|U_G) = O(\sqrt{k}\log{k})$. On the 
other hand, if $\mu(S) < \frac{1+\mu(\{0\})}{2}$,~\eqref{proofSbounds2} 
and~\eqref{knesertorus0open} yield 
    $$
     (2C+4)k+2H_2(1-\frac{2}{C}) \geq \log{\Bigl(2-\frac{\mu(\{0\})}{\mu(S)}\Bigr)},
    $$
    or, equivalently, 
     $$
    \mu(S) \leq \frac{\mu(\{0\})}{2-e^{(2C+4)k+2H_2(1-\frac{2}{C})}}.
    $$
    But as in~\eqref{volSbound}, 
    $$
     \mu(S) \geq e^{h(Y) -4k - 2H_2(1-\frac{2}{C})}.
    $$
    Combining the last two inequalities we obtain that,
   $$
    h(Y) \leq \log \mu(\{0\})+\log{\Bigl(\frac{e^{4k + 2H_2(1-\frac{2}{C})}}{2-e^{(2C+4)k+2H_2(1-\frac{2}{C})}}\Bigr)} = \log \mu(\{0\})+ O(\sqrt{k}\log{k}) \text{ as } k \to0,
    $$
     after choosing $C = 1/\sqrt{k}$.

     Clearly all the inequalities above also hold for $X$ in place of $Y,$ 
where $Y$ was arbitrary (independent of $X$). Therefore, we may replace 
$Y$ with $-Y$ and the proof is complete.
\qed

Although we may invert $x\mapsto\sqrt{x}\log{x}$ to get a strict lower bound on the deficit directly from Theorem~\ref{stabilityTh} in the case where $\{0\}$ is not open (equivalently when $G$ is connected), below we record a simple, explicit lower bound that depends on the entropy (equivalently on the relative entropy from the uniform) and is strictly positive. In that sense, it may be seen as an entropy jump statement on compact groups.

\begin{theorem} \label{12jump}
Let $G$ be a connected, compact and metrizable topological abelian group, equipped with its Borel $\sigma$-algebra. Let $X,Y$ be independent random variables on $G$ with finite entropies. 
     There exists a continuous function (independent of $G$, $X$ and $Y$),
     $$
     f:(-\infty,0] \to [0,+\infty), \text{ such that } f(x) = 0 \text{ if and only if } x=0
     $$
     and 
        $$
        \delta_{\mathrm{EPI}} = h(X+Y)-\frac12h(X)-\frac12h(Y) \geq \max{\{f(h(X)),f(h(Y))\}}.
        $$

        In fact, we may take 
         \begin{align}
f(x) =
\begin{cases} 
\frac{-x^5-4x^6}{16+8x^4} & \text{if } -\frac{1}{8}\leq x\leq 0\\ \label{fdef}
f(-\frac{1}{8}) & \text{if } x < -\frac{1}{8}.
\end{cases}
\end{align}
\end{theorem}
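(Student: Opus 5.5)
The plan is to rerun the proof of Theorem~\ref{stabilityTh} in the connected case, but to choose the free constant $C\geq 4$ in Proposition~\ref{revisitedprop} as a function of $x:=h(Y)$ rather than of $k:=\delta_{\mathrm{EPI}}$. By symmetry (replace $Y$ by $-Y$ and exchange the roles of $X$ and $Y$), it suffices to prove $k\geq f(h(Y))$. Write $q=2/C$.

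First, apply Proposition~\ref{revisitedprop} to $X$ and $-Y$. This gives a closed set $S$ satisfying \eqref{eq:1.5} and \eqref{eq:1.6}. Since $G$ is connected, Kneser's theorem gives \eqref{knesertorus2}, and we split into two cases.

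\emph{Case $\mu(S)<1/2$.} Then \eqref{proofSbounds2} and \eqref{knesertorus2} give
\[
k\geq \frac{\log 2-2H_2(q)}{2C+4}.
\]
Fixing one admissible $C$ (for instance $C=16$, where $2H_2(1/8)<\log 2$), this is an absolute positive constant. I will check that it is at least $\sup f=f(-1/8)$, which is tiny.

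\emph{Case $\mu(S)\geq 1/2$.} The chain \eqref{muSgeqe2}--\eqref{steps} gives
\[
-x=D(Y\|U_G)\leq (4C+8)k+4H_2(q),
\]
that is,
\[
k\geq \frac{-x-4H_2(q)}{4C+8}.
\]
Now choose $q$ so that $4H_2(q)\leq -x/2$. This requires an explicit elementary upper bound on the binary entropy near $0$ of power type, such as $H_2(q)\leq c\,q^{1/4}$ (or $H_2(q)\leq 2\sqrt{q}$) on $[0,1/2]$. With the quartic-root bound, the natural choice is $q\asymp x^4$, i.e.\ $C=a/x^4$ for a suitable constant $a$. The constraint $C\geq 4$ forces $|x|$ to be bounded, which is where the cutoff $-1/8\leq x$ comes from. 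Substituting this choice yields a bound of the shape
\[
k\geq \frac{-x/2}{4a/x^4+8}=\frac{-x^5}{8a+16x^4}\cdot(\text{const}).
\]
The precise constants in \eqref{fdef}, including the correction term $-4x^6$, should come out of tracking the exact elementary inequality for $H_2$ and the exact choice of $a$.

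Next, the regime $x<-1/8$. Here I would not optimize at all. The bound $D(Y\|U_G)\geq 1/8$ is all the argument above used, since it only needed $-x$ large enough relative to $4H_2(q)$. Running the same argument with $C$ chosen as for $x=-1/8$ therefore gives $k\geq f(-1/8)$. Finally, I will verify that $f$ is continuous, nonnegative, and vanishes only at $0$. On $[-1/8,0]$ we have $-x^5-4x^6=-x^5(1+4x)>0$ for $x<0$, and the two pieces agree at $-1/8$.

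The main obstacle is bookkeeping rather than ideas. One must pick a clean explicit inequality for $H_2$ and a choice of $C(x)$ that simultaneously
\begin{itemize}
\item[(a)] keeps $C\geq 4$ on $[-1/8,0]$,
\item[(b)] makes $4H_2(2/C)\leq -x/2$, and
\item[(c)] reproduces exactly the stated rational function,
\end{itemize}
while also checking that the constant from the case $\mu(S)<1/2$ dominates $f(-1/8)$. If the exact form in \eqref{fdef} does not drop out, I would settle for the same argument producing some explicit $f$ of order $|x|^5$, which still proves the qualitative statement.
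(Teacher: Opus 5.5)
Your overall architecture matches the paper's: Proposition~\ref{revisitedprop} applied to $(X,-Y)$, Kneser's dichotomy on $\mu(S)$, the triangle inequality through $U_S$ giving $k\ge\frac{-x/2-2H_2(2/C)}{2C+4}$ when $\mu(S)\ge 1/2$, the choice $C=4/x^4$ on $[-1/8,0)$, and a frozen $C$ for $x<-1/8$. Freezing $C$ there is slightly cleaner than the paper's re-optimization and gives $f(-1/8)$ directly. As written, though, two steps do not go through.

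\textbf{The case $\mu(S)<1/2$.} In the proof of Proposition~\ref{revisitedprop}, the set is $S=\{x: D(x-Y\|X-Y)\le Ck\}$, so it depends on $C$. You must therefore fix $C=C(x)$ before applying Kneser. You cannot switch to $C=16$ after learning that $\mu(S)<1/2$. (In any case $C=16$ does not work, since $2H_2(1/8)\approx 0.75>\log 2$.) With $C=4/x^4$, Case 1 only yields $\frac{\log 2-2H_2(2/C)}{2C+4}$. This tends to $0$ as $x\to 0$, so it is not an absolute constant.

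The repair is one line. For the same $C$, this bound is at least the Case 2 bound $\frac{-x/2-2H_2(2/C)}{2C+4}$, because $-x/2\le 1/16<\log 2$. For $x<-1/8$ with $C$ frozen, both case bounds dominate the Case 2 value at $x=-1/8$. The paper's own phrase ``by optimizing over $C\geq 4$'' in this case glosses over the same dependence; this comparison is what makes it rigorous.

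\textbf{The exact constants.} Your step (b), followed by bounding the numerator below by $-x/2$, does not reproduce \eqref{fdef}. With $a=4$ it gives $\frac{-x^5}{32+16x^4}$, and
$f(x)-\frac{-x^5}{32+16x^4}=\frac{-x^5(1+8x)}{32+16x^4}>0$ on $(-1/8,0)$. So this bound falls short of the ``in fact'' clause. Your fallback of some other $f$ of order $|x|^5$ would only give the qualitative statement.

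The paper keeps the exact numerator. With $C=4/x^4$, which exceeds $8$, it uses $H_2(1-u)\le u\log(e/u)$ and $\log(eC/2)\le\log(2C)\le\sqrt C$. This gives $2H_2(2/C)\le\frac{4}{C}\log\frac{eC}{2}\le\frac{4}{\sqrt C}=2x^2$, and hence
$k\ge\frac{-x/2-2x^2}{8/x^4+4}=\frac{-x^5-4x^6}{16+8x^4}$.
The $-4x^6$ term is exactly this $H_2$ correction. No separate condition such as $4H_2(2/C)\le -x/2$ is needed.
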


As will be seen from the proof, a more general family of bounds
than those corresponding to the specific $f$ in~\eqref{fdef}
are obtained in~\eqref{star}, and~\eqref{fdef} follows by a
specific (simple but suboptimal) value for $C$.
If we are interested in the best rate as $d:=D(Y\|U_G) = -h(Y) \to 0^+$ 
we may choose $C \sim \frac{\log{(\frac{1}{d}})}{d}$ to get a 
bound $\delta_{\rm EPI} \geq \Theta(\frac{d^2}{\log(1/d)})$.

\noindent
{\sc Proof.}
Let $X,Y$ be independent. As before we will show the lower bound for $\dR(X,Y)$ and then replace $Y$ with $-Y$.
    Write $k:= \dR(X,Y)$ and let $C\geq4$ be a constant depending on $k$ 
to be chosen later. Applying Proposition~\ref{revisitedprop}, we obtain a closed set $S$ such that 
    \begin{equation} \label{drBound}
    \dR(U_S,Y) \leq (C+2)k + H_{2}\Big(1-\frac{2}{C}\Big),
    \end{equation}
    and
    \begin{equation}\label{proofSbounds}
        \log{\frac{\mu(S-S)}{\mu(S)}} \leq (2C+4)k+2H_2\Big(1-\frac{2}{C}\Big).
    \end{equation}

    We apply Kneser's theorem to $S-S$. Since $G$ is connected and compact, there are no proper open subgroups by Lemma~\ref{lemmaproper} and hence $\{0\}$ is not open. Also, since $S$ is closed, $S-S$ is measurable and we may replace the inner measure with the Haar measure. Therefore, Kneser's theorem is 
equivalent to
    \begin{equation} \label{knesertorus}
        \mu(S-S) \geq \min\bigl\{2\mu(S),1\}.
    \end{equation}

    If $\mu(S) < 1/2$, then we get from~\eqref{proofSbounds} 
and~\eqref{knesertorus} that
    \begin{equation*} 
    k \geq \frac{\log2 - 2H_2(1-\frac{2}{C})}{2C+4},
    \end{equation*}
    and using $H_2(1-x) \leq x\log(\frac{e}{x}), x \leq \frac{1}{2},$ we have 
    \begin{equation} \label{10minus3}
    k \geq \frac{\log2 - 2H_2(1-\frac{2}{C})}{2C+4} > 3\cdot10^{-3},
     \end{equation}
    by optimizing over $C\geq 4.$

    If $\mu(S) \geq 1/2, $ then by~\eqref{proofSbounds} 
and~\eqref{knesertorus} again we obtain that
    \begin{equation} \label{muSgeqe}
    \mu(S) \geq e^{-(2C+4)k-2H_2(1-\frac{2}{C})}.
    \end{equation}
    
    Denote as before $U_{G}\sim \mathrm{Uni}(G)$
and observe that, since $U_{G}-U_S \sim\mathrm{Uni}(G),$
    $$
    \dR(U_S,U_{G}) =  \frac{1}{2}(h(U_{G})-h(U_S)) = -\frac{1}{2}h(U_S) = -\frac{1}{2}\log\mu(S).
    $$
    Thus, using the triangle inequality,~\eqref{drBound} and~\eqref{muSgeqe}, 
    \begin{align*}
        -\frac{1}{2}h(Y) &= \dR(Y,U_{G}) \\
        &\leq\dR(Y,U_S) + \dR(U_S, U_{G}) \\
        &\leq (2C+4)k + 2H_2(1-\frac{2}{C}), 
    \end{align*}
    hence,
    \begin{equation} \label{star}
      k \geq \frac{-\frac{1}{2}h(Y)-2H_2(1-\frac{2}{C})}{2C+4}  \geq     \frac{-\frac{1}{2}h(Y)-\frac{4}{C}\log{(\frac{eC}{2})}}{2C+4} .
    \end{equation}

    Combining~\eqref{10minus3} and~\eqref{star}, we have established 
    \begin{equation*} 
    \dR(X,Y) \geq \min\Big\{3\cdot10^{-3}, \sup_{C\geq 4}\frac{-\frac{1}{2}h(Y)-\frac{4}{C}\log{(\frac{eC}{2})}}{2C+4}\Big\}.
    \end{equation*}

    Optimizing~\eqref{star} over $C\geq 4$ and recalling~\eqref{10minus3}, 
it is clear that there exists a continuous function $g:\mathbb{R}_-\mapsto\mathbb{R}_+$ with $g(x)>0$ for $x< 0$, such that $\dR(X,Y) \geq g(h(Y))$. 
Next we plug in a convenient value for $C$ to get the claimed explicit 
function $f$ 
in~\eqref{fdef}. 
    
    If $h(Y) < -\frac{1}{8},$ substituting this into~\eqref{star} and 
optimizing over $C\geq 4, $ we get 
    \begin{equation} \label{10minus5}
        k \geq 10^{-5}. 
    \end{equation}
    On the other hand, if $h(Y) \geq -\frac{1}{8}$, we choose $C = \frac{4}{h(Y)^4} > 8$ and use $\log(2x) \leq \sqrt{x}, x > 8,$ to obtain 
    \begin{equation} \label{finalkboundnum}
    k \geq \frac{-\frac{1}{2}h(Y) - \frac{4}{\sqrt{C}}}{2C+4}=\frac{-\frac{1}{2}h(Y) - 2h(Y)^2}{\frac{8}{h(Y)^4}+4} = \frac{-h(Y)^5 - 4h(Y)^6}{16+8h(Y)^4}.
    \end{equation}

    Comparing~\eqref{10minus3},~\eqref{10minus5} and~\eqref{finalkboundnum}, we see that $f(-1/8) < 10^{-5}$ and thus,
    $$
    \dR(X,Y) = k\geq f(h(Y)),
    $$
    with $f$ as defined in~\eqref{fdef}. But since $\dR(X,Y)$ is symmetric, the last bound also hold for $h(X)$ in place of $h(Y)$, so we may choose the maximum. This completes the proof. 
\qed

\section{Entropy jumps in connected compact groups} 
\label{connectedSec}
 
We start by stating and proving the simple consequence 
of Theorem~\ref{maxentropyepi} for rates of convergence in compact groups. 

An analogous result to Corollary~\ref{rates} was established 
in \cite[Theorem~1]{johnson:00b}. That result is more general in 
that it holds for any compact group. On the other hand, in the case 
of compact metrizable abelian groups and random variables that 
have densities, it requires that the densities are bounded below 
in order for the result to be meaningful, i.e., for the rate 
to be strictly less than $1$.  

\begin{corollary}[Entropic rates of convergence on compact groups] 
\label{rates}
Let $X_1,\ldots,X_n$ be independent random variables in a connected, 
compact and metrizable abelian group $G$. Let $S_n = \sum_{i=1}^nX_i$
and write $j^*$ for the index that achieves
$$h(X_{j^*})=\max_{1\leq i\leq n}h(X_i),$$
with any ties broken arbitrarily. Then
$$
D(S_n\|U_G)
\leq
\Big[
\prod_{1\leq i\leq n, i\neq j^*}r(h(X_i))\Big]
D(X_{j^*}\|U_G)
\leq
r\Bigl(\min_{1\leq i\leq n} h(X_i)\Bigr)^{n-1}
\min_{1\leq i\leq n}D(X_i\|U_G),
$$
where
$$
r(x)
=
1-\min\left\{
\frac14,
\frac{\bigl(1-\operatorname{sinc}(e^{6x-2})^2\bigr)^2}
{8+16\frac{\operatorname{sinc}(e^{6x-2})^2}{e^{6x-2}}}
\right\},
\qquad x\in(-\infty,0],
$$
and as usual $\mathrm{sinc}(x) = \frac{\sin(\pi x)}{\pi x}, x \in \mathbb{R}.$
\end{corollary}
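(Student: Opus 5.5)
The plan is to derive Corollary~\ref{rates} from Theorem~\ref{maxentropyepi} by induction on the number of summands, adding one summand at a time. First I rewrite Theorem~\ref{maxentropyepi} as a contraction statement. Since $D(Z\|U_G)=-h(Z)$ for any $G$-valued $Z$, the theorem says that for independent $X,Y$ with $h(Y)=\max\{h(X),h(Y)\}>-\infty$,
$$D(X+Y\|U_G)\le \bigl(1-c(h(X))\bigr)D(Y\|U_G)=r(h(X))\,D(Y\|U_G),$$
where $c$ is the minimum appearing in Theorem~\ref{maxentropyepi}. I would check that this $r$ agrees with the $r$ defined in the corollary, using $s=e^{6x-2}$ and $\eta_s=\mathrm{sinc}(s)^2$. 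One also needs $r(x)\in[0,1)$. This holds because $s\in(0,e^{-2}]\subset(0,1)$, so $\eta_s<1$, so $c>0$; and $c\le 1/4$.

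Next I would show that $r$ is nonincreasing in $x$. As $x$ increases, $s$ increases in $(0,e^{-2}]$, and $\mathrm{sinc}$ is decreasing on $(0,1)$, so $\eta_s$ decreases. Since $\mathrm{sinc}(s)\le 1$, the quantity $\eta_s/s$ also decreases. Therefore $(1-\eta_s)^2$ increases and the denominator $8+16\eta_s/s$ decreases. Hence $c$ is nondecreasing and $r$ is nonincreasing. Consequently $r(h(X_i))\le r(\min_i h(X_i))$ for every $i$, which gives the second inequality once the first is proved. For the last factor I use $D(X_{j^*}\|U_G)=-\max_i h(X_i)=\min_i D(X_i\|U_G)$.

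For the first inequality I reorder the sum so that it starts from $X_{j^*}$; by commutativity $S_n$ is unchanged. Write $T_1=X_{j^*}$ and $T_{m+1}=T_m+X_{i_m}$, where $i_1,\dots,i_{n-1}$ enumerate the indices other than $j^*$. The key observation is that $h(T_m)\ge h(X_{j^*})\ge h(X_{i_m})$, by the elementary bound~\eqref{eq:mEPIG}. Since $T_m$ is independent of $X_{i_m}$, the pair $X=X_{i_m}$, $Y=T_m$ satisfies the hypothesis of Theorem~\ref{maxentropyepi} at every step. This gives $D(T_{m+1}\|U_G)\le r(h(X_{i_m}))D(T_m\|U_G)$, and iterating produces the product.

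Degenerate cases need separate handling. If $h(X_{j^*})=-\infty$, then all entropies are $-\infty$. In that case $r(-\infty)$ must be read as the limit $1$ (as $x\to-\infty$, $s\to0$ and $c\to0$), and the right-hand side equals $+\infty$, so the statement is trivial. If some $h(X_i)=-\infty$ with $i\ne j^*$, the factor is again read as $1$, and the bound $D(T_{m+1}\|U_G)\le D(T_m\|U_G)$ from~\eqref{eq:mEPIG} suffices. I expect no real obstacle. The only points needing care are verifying the monotonicity of $r$ and this bookkeeping at $-\infty$.
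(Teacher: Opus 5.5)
Your proposal is correct and is essentially the paper's argument: iterate Theorem~\ref{maxentropyepi}, with the maximal EPI guaranteeing $h(Y)\ge h(X)$ at each step, and then use that $r$ is nonincreasing for the second inequality; the paper runs the same induction in the opposite order, repeatedly removing the minimum-entropy summand from the full sum instead of building up from $X_{j^*}$. One small correction: $\eta_s/s$ decreases because $\eta_s$ and $1/s$ are both positive and decreasing on $(0,e^{-2}]$, not because $\mathrm{sinc}\le 1$; your explicit treatment of the $-\infty$ cases supplies bookkeeping that the paper leaves implicit.
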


\noindent
{\sc Proof.}
Write $S_{n\backslash \{i\}} = \sum_{j\neq i, 1\leq j \leq n}X_i$. 
Observe that the bound in Theorem~\ref{maxentropyepi} is non-decreasing in $s$. Applying it to $Y = S_{n\backslash \{i\}}$ and $X = X_i$, where 
$i = \mathrm{argmin}_{1\leq j\leq n} h(X_j)$ gives 
$$
D(S_n\|U_G) \leq r(h(X_i))D(S_{n \backslash i}\|U_G) .
$$
Repeating the bound while leaving out the minimum entropy variable each time and using the non-increasing nature of $r(\cdot)$ gives the claim. 
\qed       

The proof of Theorem~\ref{maxentropyepi} is similar to that of 
Theorem~\ref{stabilityTh}, up to the point where we bound the deficit 
below by a deficit involving the uniform on the set $S$. When the group 
is connected, we finish the argument by the strong data processing 
estimate obtained in Section~\ref{SDPIsection} using harmonic-analytic 
arguments. 

\medskip

\noindent
{\sc Proof of Theorem~\ref{maxentropyepi}.}
Assume $X,Y$ are independent and $h(Y)\geq h(X).$
Similarly to the previous section, let $\kk = h(X-Y) - h(Y)$.
By the translation invariance of entropy, we have
\begin{align}
\kk &=h(X-Y) - h(Y)\nonumber \\
&= \int_{G} \int_{G} f_X(x) f_{x-Y}(t)\log\frac{1}{f_{X-Y}(t)} - h(x-Y)\, dx\,  dt \nonumber\\
&=\int_{G} f_X(x) D(x-Y \| X-Y)\, dx. \label{fDx1} 
\end{align}
Define
\begin{equation*}
S := \{x \in G : D(x-Y \| X-Y) \leq 2\kk\}.
\end{equation*}
Note that $S$ is closed by the lower-semicontinuity of relative entropy. 
Let $p := \mathbb{P}(X \in S) = \mathbb{P}(\mathbb{I}_S(X) = 1)$. 
By Markov's 
inequality and~\eqref{fDx1}, it follows that
\begin{equation*}
p = \mathbb{P}(X \in S)\geq 1/2.
\end{equation*}
Note first that since 
\begin{align*}
    I(X;\mathbb{I}_S(X)) &= H_2(p) \\
    &= h(X) - h(X|\mathbb{I}_S(X)) \\
    &= h(X) - ph(X|X\in S) - (1-p)h(X|X\notin S),
\end{align*}
we have 
\begin{equation} \label{hExpand1}
    h(X) = ph(X|X\in S) + (1-p)h(X|X \notin S) + H_2(p).
\end{equation}

Also, since $Y$ is independent of $X$ and $\mathbb{I}_S(X)$, it follows 
that, for $i=0,1$,
$$
h(X-Y|\IS=i) \geq 
h(Y)\quad\mbox{and}\quad 
h(X-Y|\IS=i) \geq 
h(X|\IS=i).$$ 
Therefore,
\begin{align*}
h(X-Y) &\geq h(X-Y|\IS)\\
& = ph(X-Y|X\in S) + (1-p)h(X-Y|X \notin S)  \\
&\geq ph(Y) + \frac{(1-p)}{2}[h(Y)+h(X|X\notin S)]. 
\end{align*}
Combining with~\eqref{hExpand1} and using the definition of $\kk,$ we obtain 
\begin{equation*}
\kk \geq -\frac{1-p}{2}h(Y) -  
\frac{p}{2}h(X|X\in S) +\frac{1}{2}h(X) -\frac{1}{2}H_2(p).
\end{equation*}
Let $V$ be a random variable distributed as $X$ conditioned on $\{X\in S\}$. 
Since $0\leq D(V\|U_S) = \log\mu(S)-h(X|X\in S),$ we have 
$$
\log\mu(S) \geq  -\frac{1-p}{p}h(Y) +\frac{1}{p}h(X) - \frac{1}{p} H_2(p) -\frac{2}{p}\kk.
$$
Since $h(X),h(Y)\leq 0,$ $\kk = h(X-Y)-h(Y) \leq -h(Y)$ and $p\geq \frac{1}{2},$ this implies
\begin{align} 
    \log\mu(S) &\geq  4h(Y) +2h(X) - 2 \geq 6h(X)-2. \label{volSbound1} 
\end{align}
so that, in particular, $\mu(S)>0, $ as long as $h(X) >-\infty.$

By the expression~\eqref{fDx1} and the definition of $S$,
\begin{equation} \label{relativetostrong}
2\kk \geq h(Y-U_S) - h(Y).
\end{equation}
If $h(Y-U_S) > \frac{1}{2}h(Y)$ we are done, since then
$$
2\kk \geq h(Y-U_S)-h(Y) \geq -\frac{1}{2}h(Y).
$$
So we may assume that 
\begin{equation} \label{bootstrap}
    h(Y-U_S) \leq \frac{1}{2}h(Y).
\end{equation}
Let $U_S'$ be an independent copy of $U_S$. By an application of the sum-submodularity inequality, Proposition~\ref{submodularity}, we have 
\begin{align}
    h(Y-U_S) - h(Y) &\geq h(Y-U_S'-U_S) - h(Y-U_S') \nonumber\\ 
    &= D(Y-U_S'\|U_{G}) - D(Y-U_S'+U_S\|U_{G}).
\label{repeatedUS}
\end{align}
Since 
$f_{Y-U_S'}(y) = \frac{1}{\mu(S)}\int_Sf_Y(y+u)\,du \leq \frac{1}{\mu(S)},$ 
we may apply the strong data processing inequality for relative entropy
(Proposition~\ref{DSDPIlemma} in Section~\ref{SDPIsection} below)
to $-(Y-U_S')$, 
to obtain that
\begin{align*}
 D(Y-U_S'\|U_{G}) - D(Y-U_S'-U_S\|U_{G}) &\geq \frac{(1-\eta_S)^2}{2+4\eta_S\|f_{Y-U_S'}\|_{\infty}}D(Y-U_S\|U_{G}) \\
 &\geq \frac{(1-\eta_S)^2}{2+\frac{4\eta_S}{\mu(S)}}D(Y-U_S\|U_{G}),
\end{align*}
where 
$$\eta_S = \left(\frac{\sin(\pi \mu(S))}{\pi \mu(S)}\right)^2.$$
But now the assumption~\eqref{bootstrap} says 
that $D(Y-U_S\|U_{G}) \geq \frac{1}{2}D(Y\|U_{G})$, and 
combining with~\eqref{repeatedUS}, gives
\begin{equation}\label{etasfinalbound}
h(Y-U_S) - h(Y) \geq \frac{(1-\eta_S)^2}{4+\frac{8\eta_S}{\mu(S)}}D(Y\|U_{G}),
\end{equation}
where $\mu(S)$ satisfies the bound~\eqref{volSbound1}. The result follows 
by~\eqref{relativetostrong} after noting that the bound 
in~\eqref{etasfinalbound} is non-decreasing in $\mu(S)$. 
\qed       

\subsection{Strong data processing via harmonic analysis} 
\label{SDPIsection}

Let $P,Q$ be probability measures. The $\chi^2$ divergence between $P$ and $Q$ is 
$$
\chi^2(P\|Q) := \int\Big(\frac{dP}{dQ}-1\Big)^2\,dQ  \quad \text{if } P \ll Q
$$
and $\chi^2(P\|Q) = \infty$ otherwise. 

When $P$ and $Q$ have densities, say $f$ and $g$, with respect to a common measure, e.g., the Haar measure, we also write 
$\chi^2(f\|g) = \chi^2(P,Q).$

We use $f\ast g$ to denote the convolution of $f$ and $g$ on $G$. We denote the Pontryagin dual of $G$, i.e. the space of all continuous homomorphisms from $G$ to the one-dimensional torus $\mathbb T=\RL/(2\pi\IN)$, 
by $\hat{G}.$ We also identify ${\mathbb T}$ with the unit-circumference
circle in the complex plane $\Co$ whenever convenient.
We refer the reader to the 
textbook~\cite{hewitt:bookI,hewitt:bookII} for harmonic analysis 
on such groups. 

First we need an upper bound on the absolute value of nontrivial Fourier 
coefficients. In the proof we rely on the observation that nontrivial 
characters in connected compact abelian groups must be surjective, which 
was also used in~\cite{tao:18} -- see the discussion following 
equation~(4.21) there. 

\begin{lemma} 
\label{nonzerocharlemma}
Let $G$ be a connected compact abelian group, let $\mu$ be its Haar 
probability measure, and let $\chi \in \widehat G$ be a nontrivial 
continuous character. For a measurable set $S \subset G$ with 
$\mu(S)\in(0,1],$ write $g_S:=\frac{ \mathbb{I}_S}{\mu(S)}.
$
Then:
$$ 
\bigl|\widehat{g_S}(\chi)\bigr|
=
\left|
\frac{1}{\mu(S)}\int_G   \mathbb{I}_S(x)\,\overline{\chi(x)}\,d\mu(x)
\right|
\leq
\frac{\sin(\pi\mu(S))}{\pi\mu(S)}.
$$ 
\end{lemma}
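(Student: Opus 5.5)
The idea is to push the problem forward to the circle via the character $\chi$ and then solve a one-dimensional rearrangement problem there. A nontrivial continuous character $\chi:G\to\mathbb{T}$ has connected, compact image (continuous image of a connected compact group), and this image is a closed subgroup of the circle. The only closed connected subgroups of the circle are $\{1\}$ and the whole circle. Since $\chi$ is nontrivial, it must be surjective. The pushforward $\chi_*\mu$ is then a translation-invariant probability measure on the circle, because $\chi$ is a surjective homomorphism and $\mu$ is invariant. Hence $\chi_*\mu$ is the normalized arc-length (Haar) measure $\lambda$ on the circle.

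Next, rewrite the Fourier coefficient as an integral on the circle. By the change of variables,
\[
\int_G \mathbb{I}_S(x)\overline{\chi(x)}\,d\mu(x)=\int_{\mathbb{T}} \overline{z}\,\phi(z)\,d\lambda(z).
\]
Here $\phi$ is the conditional expectation of $\mathbb{I}_S$ given $\chi$, that is, the density of the pushforward of $\mu|_S$ with respect to $\lambda$. This function satisfies $0\le\phi\le1$ and $\int\phi\,d\lambda=\mu(S)=:m$. The constraint $\phi\le 1$ holds because $\chi_*(\mu|_S)\le\chi_*\mu=\lambda$. So it suffices to show that any measurable $\phi:\mathbb{T}\to[0,1]$ with $\int\phi=m$ satisfies $\bigl|\int \overline{z}\phi(z)\,d\lambda(z)\bigr|\le \frac{\sin(\pi m)}{\pi}$. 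Dividing by $m$ then gives the claim.

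For this extremal problem we rotate. Choose $\theta$ such that $\bigl|\int\overline{z}\phi\bigr|=\int \mathrm{Re}(e^{-i\theta}\overline{z})\phi(z)\,d\lambda$; this is possible after multiplying by a unimodular constant. Writing $z=e^{it}$, we must maximize $\int \cos(t+\theta)\phi\,d\lambda$ subject to $0\le\phi\le1$ and $\int\phi=m$. By the bathtub principle, the maximum is attained by the indicator of the arc of $\lambda$-measure $m$ centred where $\cos(t+\theta)$ is largest. The reason is that $\int(\mathbb{I}_A-\phi)(\cos-c)\ge0$, where $c$ is the level of $\cos$ on the boundary of the arc $A$. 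For that arc, the integral is
\[
\frac{1}{2\pi}\int_{-\pi m}^{\pi m}\cos u\,du=\frac{\sin(\pi m)}{\pi},
\]
which gives the bound.

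The main obstacle is the first step: making surjectivity and the identification $\chi_*\mu=\lambda$ rigorous. In particular, we need uniqueness of Haar measure on the circle for the pushforward, and the fact that a connected closed subgroup of the circle is either trivial or everything. The remaining steps are routine once the problem is reduced to one dimension. Measurability of $\phi$ follows from Radon–Nikodym, since $\chi_*(\mu|_S)\ll\lambda$.
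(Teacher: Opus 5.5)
Your proof is correct. Its first half follows the paper's: surjectivity of $\chi$, identification of $\chi_*\mu$ with normalized Haar measure on $\mathbb{T}$, the density $\phi$ with $0\le\phi\le 1$ and $\int\phi=\mu(S)$, and the rotation that turns the modulus into a real part.

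The difference is in how you solve the one-dimensional extremal problem. The paper uses a layer-cake argument: it writes $\phi=\int_0^1\mathbb{I}_{\{\phi>t\}}\,dt$ and applies a set version of the rearrangement claim to each level set $A_t$, asserting without proof that the centred arc is optimal among sets of given measure. It then recombines the pieces via Jensen, using concavity of $x\mapsto\sin(\pi x)$ on $[0,1]$.

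You instead apply the bathtub principle directly to the $[0,1]$-valued function $\phi$. Your certificate is $\int(\mathbb{I}_A-\phi)\bigl(\cos(\cdot+\theta)-\cos(\pi m)\bigr)\,d\lambda\ge 0$. Combined with $\int(\mathbb{I}_A-\phi)\,d\lambda=0$, it gives the bound in one line.

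What your route buys:
\begin{itemize}
\item It is shorter, and needs neither the layer-cake/Fubini step nor the concavity argument.
\item It supplies the justification for the paper's unproved set-rearrangement claim, which is the special case $\phi=\mathbb{I}_B$.
\item Your change of variables goes directly through the pushforward $\chi_*(\mu|_S)$, without needing the conditional-expectation identification $\phi_S\circ\chi=\mathbb{E}(\mathbb{I}_S\mid\chi)$. This is slightly more economical.
\end{itemize}
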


\noindent
{\sc Proof.}
Since $G$ is connected and $\chi$ is continuous, the image $\chi(G)$ is a compact and connected subset of $\mathbb T$. Since $\chi$ is a homomorphism, $\chi(G)$ is also a subgroup, and because $\chi\neq 1$ it must be nontrivial. 
The only such subgroup is $\mathbb T$ itself and hence $\chi(G)=\mathbb T$.
Consider the pushforward measure $\nu$ defined by $\nu(A) = \mu(\chi^{-1}(A))$ for every Borel measurable $A \subset \mathbb T$. By translation invariance of $\mu$, $\nu$ is rotation invariant on $\mathbb T$ and thus a Haar measure. Since $\chi(G)=\mathbb T$, $\nu(\mathbb T) = 1$ and therefore it is the normalized Haar measure on $\mathbb T$. 
Since $\chi(G) = \mathbb T$, we may define $\phi_S:\mathbb T\to [0,1]$ by $\phi_S = \frac{d\nu_S}{d\nu},$ where $\nu_S(A) = \mu(\chi^{-1}(A) \cap S).$ One can then check that $\phi_S \circ\chi$ is a version of 
the conditional expectation 
$\mathbb{E}(\mathbb I_S|\chi)=
\mathbb{E}(\mathbb I_S|\sigma(\chi))$. 
Hence, by a change of variables,
$$
\int_G \mathbb I_S(x)\,\overline{\chi(x)}\,d\mu(x)
=
\int_G \mathbb E(\mathbb I_S\mid \chi)(x)\,\overline{\chi(x)}\,d\mu(x)
=
\int_{G}\phi_S\circ\chi(x)\,\overline{ \chi(x)}\,d\mu(x)
=
\int_{\mathbb T}\phi_S(z)\,\overline z\,d\nu(z).
$$

Therefore, it suffices to show that
$$
\left|\int_{\mathbb T}\phi_S(z)\,\overline z\,d\nu(z)\right|
\leq
\frac{\sin(\pi\mu(S))}{\pi}.
$$
We note that 
$$
\int_{\mathbb T}\phi_S\,d\nu
=
\int_G \mathbb E(\mathbb I_S| \chi)\,d\mu
=
\int_G \mathbb I_S\,d\mu
=
\mu(S).
$$
Choosing $\eta\in\mathbb T$ such that
$$
\Big|\int_{\mathbb T}\phi_S(z)\,\overline z\,d\nu(z)\Big| = \eta\int_{\mathbb T}\phi_S(z)\,\overline z\,d\nu(z),
$$
we obtain
$$
\Big|\int_{\mathbb T}\phi_S(z)\,\overline z\,d\nu(z)\Big|
=
\Re\Bigl(\eta \int_{\mathbb T}\phi_S(z)\,\overline z\,d\nu(z)\Bigr)
=
\int_{\mathbb T}\phi_S(z)\,\Re(\eta \overline z)\,d\nu(z),
$$
where we write $\RL w$ for the real part of any complex $w\in\Co$.
And since $\phi_S \leq 1,$ we have
$$
\phi_S(z)=\int_0^1 \mathbb I_{\{\phi_S>t\}}(z)\,dt,
$$
so that, by Fubini's theorem, we get
$$
\Big|\int_{\mathbb T}\phi_S(z)\,\overline z\,d\nu(z)\Big|
=
\int_0^1 \left(\int_{A_t}\Re(\eta\overline z)\,d\nu(z)\right)\,dt,
$$
where $A_t:=\{z:\phi_S(z)>t\}.$
Using Fubini's theorem again,
$$
\int_0^1 \nu(A_t)\,dt
=
\int_{\mathbb T}\phi_S\,d\nu
=
\mu(S).
$$

Now we claim, that for every measurable $A\subset\mathbb T$,
$$
\int_A \Re(\eta\overline z)\,d\nu(z)\leq \frac{\sin(\pi \nu(A))}{\pi}.
$$
Indeed, by rotation invariance of $\nu$, it suffices to treat 
the case $\eta=1$, so that the integrand is $\Re z$. 
Writing $z=e^{i\theta}$, $\theta\in[-\pi,\pi)$, and 
$d\nu(z)=\frac{d\theta}{2\pi}$, we observe that for
any $a\in[0,1]$ the supremum
$$
\sup_{|A|=2\pi a}\frac{1}{2\pi}\int_A \cos\theta\,d\theta
$$
 is achieved by $A=[-\pi a,\pi a].$
For this arc,
$$
\frac{1}{2\pi}\int_A \cos\theta\,d\theta
=
\frac{1}{2\pi}\int_{-\pi a}^{\pi a}\cos\theta\,d\theta
=
\frac{\sin(\pi a)}{\pi}.
$$
This proves the claim.

Applying the claim to $A=A_t$ gives
$$
\Big|\int_{\mathbb T}\phi_S(z)\,\overline z\,d\nu(z)\Big|
\leq
\int_0^1 \frac{\sin(\pi \nu(A_t))}{\pi}\,dt.
$$
Since $x\mapsto \sin(\pi x)$ is concave on $[0,1]$, 
$$
\Big|\int_{\mathbb T}\phi_S(z)\,\overline z\,d\nu(z)\Big|
\leq
\frac{1}{\pi}\sin\!\left(\pi\int_0^1\nu(A_t)\,dt\right)
=
\frac{\sin(\pi\mu(S))}{\pi}.
$$
Therefore,
$$
\bigl|\widehat{g_S}(\chi)\bigr|
=
\frac{\Big|\int_{\mathbb T}\phi_S(z)\,\overline z\,d\nu(z)\Big|}{\mu(S)}
\leq
\frac{\sin(\pi\mu(S))}{\pi\mu(S)},
$$
which is the desired inequality.
\qed       

The previous estimate naturally leads to the following $L^2$ contraction estimate:
\begin{lemma} \label{fourieruniformLemma}
Let $G$ be a connected compact abelian group, let $\mu$ be its 
Haar probability measure and $Y$ be a random variable on $G$ with density $f_Y \in L^2(G)$. Let $S$ be a set of positive Haar measure and 
write $g_S = \frac{1}{\mu(S)}\mathbb{I}_S$ for the density of the 
random variable $U_S$ having the uniform distribution on $S$ and 
being independent of $Y$. Then,
\begin{equation} \label{L2eta}
    \int_{G}(f_Y\ast g_S(x) - 1)^2\, dx \leq \eta_S\int_{G}(f_Y(x)-1)^2\, dx,
\end{equation}
where,
$$\eta_S = \left(\frac{\sin(\pi \mu(S))}{\pi \mu(S)}\right)^2.$$
Equivalently, 
\begin{equation} \label{chi2claim}
\chi^2(Y+U_S\,\|\,U_{G}) = \chi^2(Y+U_S\,\|\,U_{G} +  U_S)
\le
\eta_S\,\chi^2(Y\,\|\,U_{G}),
\end{equation}
where
$U_{G}$ is uniformly distributed on $G$ and independent of $U_S$.

\end{lemma}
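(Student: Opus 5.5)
The plan is to prove \eqref{L2eta} with Plancherel's theorem on $G$, and then to rewrite it in the $\chi^2$ form \eqref{chi2claim}. Since $f_Y\in L^2(G)$ and $g_S\in L^1(G)$, Young's inequality gives $f_Y\ast g_S\in L^2(G)$. Also $\mu(G)=1$ and $\mu$ is the Haar probability measure, so $f_Y\ast g_S$ is exactly the density of $Y+U_S$ with respect to $\mu$. On a compact abelian group the characters $\widehat G$ form an orthonormal basis of $L^2(G,\mu)$, and the Fourier transform turns convolution into multiplication: $\widehat{f_Y\ast g_S}(\chi)=\widehat{f_Y}(\chi)\,\widehat{g_S}(\chi)$ for every $\chi\in\widehat G$.

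Let $\chi_0$ denote the trivial character. The function $f_Y-1$ has Fourier coefficients $\widehat{f_Y}(\chi)$ for $\chi\neq\chi_0$, and its coefficient at $\chi_0$ is $\int f_Y\,d\mu-1=0$. Similarly, $\widehat{f_Y\ast g_S}(\chi_0)=\widehat{f_Y}(\chi_0)\widehat{g_S}(\chi_0)=1$, so $f_Y\ast g_S-1$ also has zero coefficient at $\chi_0$. Plancherel then gives
\begin{equation*}
\int_G (f_Y\ast g_S-1)^2\,d\mu=\sum_{\chi\neq\chi_0}|\widehat{f_Y}(\chi)|^2\,|\widehat{g_S}(\chi)|^2
\le \Big(\sup_{\chi\neq\chi_0}|\widehat{g_S}(\chi)|^2\Big)\sum_{\chi\neq\chi_0}|\widehat{f_Y}(\chi)|^2 .
\end{equation*}
The last sum equals $\int_G(f_Y-1)^2\,d\mu$. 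Lemma~\ref{nonzerocharlemma} uses connectedness to give $|\widehat{g_S}(\chi)|\le \sin(\pi\mu(S))/(\pi\mu(S))$ for every nontrivial $\chi$, so the supremum is at most $\eta_S$. This proves \eqref{L2eta}.

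For the equivalent form \eqref{chi2claim}, first note that $U_G+U_S\sim\mathrm{Uni}(G)$ by translation invariance of Haar measure, which gives the stated equality of the two $\chi^2$ divergences. For any density $f$ with respect to $\mu$, we have $\chi^2(f\|1)=\int(f-1)^2\,d\mu$. Hence the two sides of \eqref{chi2claim} are exactly the two sides of \eqref{L2eta}.

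I expect no step to be a serious obstacle, since the analytic content is already contained in Lemma~\ref{nonzerocharlemma}. The points that need care are checking that the Fourier conventions are consistent, so that the convolution theorem holds with Haar probability measure, and noting that Plancherel applies for a compact abelian group that need not be second countable. Metrizability of $G$ makes $\widehat G$ countable, so the sums over characters are ordinary countable sums.
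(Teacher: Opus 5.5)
Your proposal is correct and follows essentially the same route as the paper. Both apply Parseval to $f_Y\ast g_S-1=(f_Y-1)\ast g_S$, use the vanishing of the trivial Fourier coefficient, bound the remaining coefficients uniformly via Lemma~\ref{nonzerocharlemma}, and obtain the $\chi^2$ form from $U_G+U_S\sim\mathrm{Uni}(G)$ together with $\chi^2(f\|1)=\int_G (f-1)^2\,d\mu$.
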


\noindent
{\sc Proof.}
Setting
$
h:=f_Y-1,$
we have $\int_{G} h(x)\,dx=0$, and
$$
\chi^2(Y\,\|\,U_{G})
=\int_{G} (f_Y-1)^2(x)\,dx
=\|h\|_{L^2(G)}^2.
$$
Since the density of $Y+U_S$ is $f_Y*g_S$ and $1*g_S=1$, we have
$$
(f_Y*g_S)-1=(f_Y-1)*g_S=h*g_S.
$$
Therefore,
$$
\chi^2(Y+U_S\,\|\,U_{G})
=\|h*g_S\|_{L^2(G)}^2,
$$
and so~\eqref{L2eta} and~\eqref{chi2claim} are equivalent 
since $U_{G}+U_S$ has the same distribution
as $U_{G}$ and has density $\mathbb{I}_{G}$.

By Parseval's identity~\cite[Corollary~31.19]{hewitt:bookI},
$$
\|h*g_S\|_{L^2( G)}^2
=
\int_{\hat{G}}|\widehat{h\ast g_S}(\chi)|^2\,d\theta(\chi)
=
\int_{\hat{G}} |\widehat{h}(\chi)|^2\,|\widehat{g_S}(\chi)|^2\,d\theta(\chi),
$$
where $\theta$ is Haar measure on $\hat{G}$,
$$
\widehat{g_S}(\chi)=\int_{G} g_S(x)\overline{\chi(x)}\,dx
=\frac1{\mu(S)}\int_S \overline{\chi(x)}\,dx
$$
are the Fourier coefficients of $g_S$, and similarly 
$\widehat{h}$ are the Fourier coefficients of $h$.
Since $\widehat h(1)=0$, 
\begin{equation*} 
\|h*g_S\|_{L^2(G)}^2
=
\int_{\hat{G}} |\widehat{h}(\chi)|^2\,|\widehat{g_S}(\chi)|^2\,d\theta(\chi)
\le
\Big(\sup_{\chi \neq 1} |\widehat{g_S}(\chi)|^2\Big)
\int_{\hat{G}} |\widehat{h}(\chi)|^2\,d\theta(\chi).
\end{equation*}
Applying Parseval's identity again,
\begin{equation*} 
\int_{\hat{G}} |\widehat{h}(\chi)|^2\,d\theta(\chi)=\|h\|_{L^2(G)}^2.
\end{equation*}
The result follows by Lemma~\ref{nonzerocharlemma}.
\qed       

The following lemma is essentially~\cite[Eq.~(78)]{polyanskiy-wu:17}, 
but taking into account the error term. 
See also \cite[Theorem~1]{choi:94}. It holds in any locally 
compact abelian group. 

\begin{lemma} 
\label{Dchilemma}
    Let $G$ be any locally compact abelian group. Let $P,Q$ be probability measures on $G$ that are absolutely continuous with respect to Haar measure, 
and such that $D(P\|Q)<\infty.$ For any $t\geq 0$, 
write $Q^t$ for the probability measure
    $$
    Q^t:= \frac{tQ+P}{1+t}.
    $$
    Then for any $T>0$:
    $$
    \int_0^T \chi^2(P\|Q^t)\,\frac{dt}{t(1+t)} = D(P\|Q) - \int\log{\Biggl(\frac{\frac{dP}{dQ} + T}{1+T}\Biggr)}\, dP. 
    $$
\end{lemma}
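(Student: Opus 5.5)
The plan is a direct computation with the likelihood ratio. Set $r=\frac{dP}{dQ}$, which exists since $D(P\|Q)<\infty$ forces $P\ll Q$. Then $\frac{dP}{dQ^t}=\frac{(1+t)r}{t+r}$, well defined $P$-a.e. and also $Q^t$-a.e. because $Q\ll Q^t$. Write the chi-squared divergence as
\[
\chi^2(P\|Q^t)=\int\Big(\frac{dP}{dQ^t}\Big)^2dQ^t-1=\int\frac{dP}{dQ^t}\,dP-1=\int\frac{(1+t)r}{t+r}\,dP-1 .
\]
Since $\int dP=1$, this becomes
\[
\chi^2(P\|Q^t)=\int\frac{(1+t)r-t-r}{t+r}\,dP=\int\frac{t(r-1)}{t+r}\,dP .
\]
Hence the integrand of the $t$-integral is
\[
\frac{\chi^2(P\|Q^t)}{t(1+t)}=\int\frac{r-1}{(1+t)(t+r)}\,dP .
\]

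Next, for each fixed value $r\in(0,\infty)$, compute the elementary integral by partial fractions:
\[
\frac{r-1}{(1+t)(t+r)}=\frac{1}{1+t}-\frac{1}{t+r}.
\]
Integrating from $0$ to $T$ gives
\[
\log(1+T)-\log(T+r)+\log r=\log r-\log\frac{r+T}{1+T}.
\]
The points where $r=0$ form a $P$-null set, and $r<\infty$ holds $P$-a.e., so this identity holds $P$-a.e.

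Then interchange the $t$-integral and the $P$-integral by Fubini/Tonelli, and integrate the last display against $P$. This yields $\int\log r\,dP=D(P\|Q)$ minus $\int\log\frac{r+T}{1+T}\,dP$, which is exactly the claimed identity.

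The main obstacle is justifying this interchange, because the integrand $\frac{r-1}{(1+t)(t+r)}$ changes sign according to whether $r>1$ or $r<1$. I would split the integral into the regions $\{r\ge1\}$ and $\{r<1\}$, on each of which the integrand has constant sign, so Tonelli applies separately on each.

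On $\{r<1\}$, the $t$-integral equals $\log r-\log\frac{r+T}{1+T}$. Its absolute value is at most $\log\frac{1+T}{T}$, since $\frac{r+T}{r(1+T)}\le\frac{1+T}{T}\cdot\frac{1}{1}$ is false in general. So the simple bound is $|\cdot|\le\log(1+T)-\log T+|\log r|\mathbb I\ldots$; more cleanly, the value is $\int_0^T\big(\frac1{1+t}-\frac1{t+r}\big)dt\in[\log r,0]$. Its integral over $\{r<1\}$ is therefore bounded by $\int_{r<1}\log\frac1r\,dP=\int_{r<1}r\log\frac1r\,dQ\le e^{-1}$, which is finite.

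On $\{r\ge1\}$, the $t$-integral is nonnegative and at most $\log r$, whose $P$-integral is finite because $D(P\|Q)<\infty$ and the negative part of $\log r$ is integrable.

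With both pieces finite, the two Tonelli computations combine, and the second term $\int\log\frac{r+T}{1+T}\,dP$ is finite as well. This completes the plan.
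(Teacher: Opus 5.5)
Your proof is correct and takes essentially the same route as the paper: you write $\chi^2(P\|Q^t)/(t(1+t))$ as $\int \frac{r-1}{(1+t)(t+r)}\,dP$, evaluate the elementary $t$-integral to get $\log r-\log\frac{r+T}{1+T}$, and interchange the two integrals. The only difference is that you actually justify the Fubini interchange, by splitting into $\{r\ge1\}$ and $\{r<1\}$ and using $\int_{\{r<1\}} r\log\frac1r\,dQ\le e^{-1}$ together with $D(P\|Q)<\infty$, whereas the paper simply asserts it; before writing this up, delete the abandoned sentence about the ``$\log\frac{1+T}{T}$'' bound that precedes your ``more cleanly'' argument.
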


\noindent
{\sc Proof.}
Since $D(P\|Q)<\infty$, $P$ is absolutely continuous with respect to $Q$
with density $f$, say. By definition, $Q^t$ is also absolutely continuous
with respect to $Q$ for any $t>0$, with a positive density 
$f_t=\frac{f+t}{1+t}$.
Let $T>0$ arbitrary. For any $t>0$, we have
\begin{align*}  
    \int \frac{f-1}{(f+t)(1+t)}\,dP 
    & =  \int \frac{f-1}{f_t(1+t)^2}\,dP \\
    &=   \frac{1}{t(1+t)}\int \frac{(f-1)\frac{t}{1+t}}{f_t}\,dP  \\
    &=   \frac{1}{t(1+t)}\int \frac{f-f_t}{f_t}\,dP \\
    &=   \frac{1}{t(1+t)}\int \Big(\frac{f}{f_t}-1\Big)
		\frac{f}{f_t}\,dQ^t \\
    &=   \frac{1}{t(1+t)}\int \Big(\frac{f}{f_t}-1\Big)^2\,dQ^t\\
    &=  \frac{1}{t(1+t)}\chi^2(P\|Q^t).
\end{align*}
Integrating over $t$ and using Fubini's theorem,
\be
\int_0^T 
\chi^2(P\|Q^t)
\frac{dt}{t(1+t)}
=
    \int \int_0^T\frac{f-1}{(f+t)(1+t)}\,dt\,dP.
\label{eq:identity}
\ee
Now we observe that, for any $x>0, $ 
\be
    \int_0^T\frac{x-1}{(x+t)(1+t)}\,dt = \log{x} 
	- \log{\Bigl(\frac{x+T}{1+T}\Bigr)}.
\label{eq:integral}
\ee
Taking $x=f$ whenever $f$ is positive, i.e., on the support
of $P$, and substituting~\eqref{eq:integral} into~\eqref{eq:identity},
gives
$$
\int_0^T 
\chi^2(P\|Q^t)
\frac{dt}{t(1+t)}
=
D(P\|Q)-
\int
\log\Big(\frac{f+T}{1+T}\Big)
\,dP,
$$
as required.
\qed       

Next we use Lemma~\ref{fourieruniformLemma} to bound the 
interpolating chi-squared divergences in the integrand. 
\begin{lemma} \label{gtsdpilemma}
    Let $G$ be a connected compact abelian group, let $\mu$ be its 
Haar probability measure and $Y$ be a random variable on $G$ with 
density $f_Y$. Let $S$ be a set of positive Haar measure 
and $g_S = \frac{1}{\mu(S)}\mathbb{I}_S$ be the density of a uniform 
random variable $U_S$ on $S$. For $t>0$, also define
    \begin{equation*} 
    g^t(y) = \frac{t+f_Y(y)}{1+t}, \quad y \in G.
    \end{equation*}
    Then, for all $t > 0 $
\begin{equation*} 
    \chi^2(f_Y \ast g_S \| g^t  \ast g_S) \leq \eta_S\frac{t+\|f_Y\|_{\infty}}{t}\chi^2(f_Y\|g^t),
\end{equation*}
where
$$\eta_S = \left(\frac{\sin(\pi \mu(S))}{\pi \mu(S)}\right)^2.$$

\end{lemma}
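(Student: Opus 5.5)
We work with explicit formulas for the two $\chi^2$ divergences and reduce to Lemma~\ref{fourieruniformLemma} applied to a reweighted function. Write $f=f_Y$ and $h=f-1$, so that $g^t-1=\frac{h}{1+t}$ and $f-g^t=\frac{t}{1+t}h$. Since convolution with $g_S$ is linear and $1\ast g_S=1$, we have
$$
f\ast g_S-g^t\ast g_S=\frac{t}{1+t}\,h\ast g_S,
\qquad
g^t\ast g_S=\frac{t+f\ast g_S}{1+t}.
$$
The left-hand side of the lemma is therefore
$$
\chi^2(f\ast g_S\|g^t\ast g_S)=\frac{t^2}{(1+t)^2}\int_G\frac{(h\ast g_S)^2}{g^t\ast g_S}\,dx.
$$
The right-hand side contains $\chi^2(f\|g^t)=\frac{t^2}{(1+t)^2}\int_G\frac{h^2}{g^t}\,dx$. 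After cancelling the common factor, it suffices to show
$$
\int_G\frac{(h\ast g_S)^2}{g^t\ast g_S}\,dx\le \eta_S\,\frac{t+\|f\|_\infty}{t}\int_G\frac{h^2}{g^t}\,dx.
$$
We may assume $\|f\|_\infty<\infty$, since otherwise there is nothing to prove.

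We sandwich the weights using pointwise bounds. First, $g^t\ast g_S\ge \frac{t}{1+t}$ everywhere because $f\ge0$, so the left side is at most $\frac{1+t}{t}\|h\ast g_S\|_2^2$. Next, by Lemma~\ref{fourieruniformLemma}, which needs $f\in L^2$ and holds since $f$ is bounded, $\|h\ast g_S\|_2^2\le\eta_S\|h\|_2^2$. Finally, $g^t\le\frac{t+\|f\|_\infty}{1+t}$ pointwise, so
$$
\|h\|_2^2\le\frac{t+\|f\|_\infty}{1+t}\int_G\frac{h^2}{g^t}\,dx.
$$
Chaining these three bounds gives
$$
\int_G\frac{(h\ast g_S)^2}{g^t\ast g_S}\le\eta_S\,\frac{1+t}{t}\cdot\frac{t+\|f\|_\infty}{1+t}\int_G\frac{h^2}{g^t}=\eta_S\,\frac{t+\|f\|_\infty}{t}\int_G\frac{h^2}{g^t},
$$
which is exactly the required inequality.

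The only delicate step is the bookkeeping of the prefactors $\frac{t}{1+t}$, so that the $(1+t)$ factors cancel and leave exactly $\frac{t+\|f\|_\infty}{t}$. The weighted-to-unweighted comparisons are crude but sufficient. We also need to justify that all integrals are finite, which holds because $g^t\ge t/(1+t)>0$ and $f$ is bounded.
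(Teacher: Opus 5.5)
Your proposal is correct and follows the paper's proof essentially step for step. You bound the convolved denominator below by $t/(1+t)$, apply the $L^2$ contraction of Lemma~\ref{fourieruniformLemma} to $h=f_Y-1$, and bound $g^t$ above by $(t+\|f_Y\|_\infty)/(1+t)$ to recover $\chi^2(f_Y\|g^t)$, with the $(1+t)$ factors cancelling exactly as in the paper. Your remarks that $f_Y\in L^2$ because $f_Y$ is bounded, and that the unbounded case is trivial, are small justifications the paper leaves implicit.
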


\noindent
{\sc Proof.}
    Denote $M := \|f_Y\|_{\infty}$.
We have 
\begin{equation} \label{chi2M}
    \chi^2(f_Y\|g^t) = \int_{G}\frac{(f_Y(y)-g^t(y))^2}{g^t(y)} \, dy = \frac{t^2}{t+1}\int_{G}\frac{(f_Y(y)-1)^2}{t+f_Y(y)} \, dy \geq \frac{t^2}{(t+1)(t+M)}\int_{G}{(f_Y(y)-1)^2} \, dy .
\end{equation}
Furthermore, 
\begin{align*}
     \chi^2(f_Y \ast g_S\|g^t \ast g_S) 
&= \int_{G}\frac{\big((f_Y-g^t)\ast g_S\big)^2(y)}{g^t\ast g_S(y)}\,dy \\
     &= \frac{t^2}{t+1}\int_{G}\frac{\big(f_Y\ast g_S-1\big)^2(y)}
{t+f_Y\ast g_S(y)} \, dy \\
     &\leq \frac{t}{t+1}\int_{G}{\big(f_Y\ast g_S-1\big)^2(y)}\, dy.
\end{align*}
Using Lemma~\ref{fourieruniformLemma}, we obtain 
\begin{equation*}
     \chi^2(f_Y \ast g_S\|g^t \ast g_S) \leq \frac{t}{t+1}\eta_S \int_{G}{\big(f_Y-1\big)^2(y)}\, dy,
\end{equation*}
and combining this bound with~\eqref{chi2M} gives,
$$
 \chi^2(f_Y \ast g_S\|g^t \ast g_S) \leq \eta_S\frac{t+M}{t} \chi^2(f_Y \|g^t ),
$$
which is the claimed inequality. 
\qed       

We are finally in position to prove the required 
strong data processing inequality for relative entropy: 

\begin{proposition} \label{DSDPIlemma}
Let $G$ be a connected compact abelian group and let $\mu$ be its 
Haar probability measure. Let $Y$ be a random variable on $G$ with bounded density $f_Y$. Let $S$ be a set of positive Haar measure and 
write $g_S = \frac{1}{\mu(S)}\mathbb{I}_S$ for the density of 
the random variable $U_S$ with uniform distribution on $S$,
independent of $Y$. Then 
$$
D(Y\|U_{G}) - D(Y+U_S\|U_{G}) \geq \frac{(1-\eta_S)^2}{2+4\eta_S\|f_Y\|_{\infty}}D(Y\|U_{G}),
$$
where 
$$\eta_S = \left(\frac{\sin(\pi \mu(S))}{\pi \mu(S)}\right)^2.$$
\end{proposition}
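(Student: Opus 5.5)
The plan is to express both relative entropies as integrals of $\chi^2$ divergences using Lemma~\ref{Dchilemma}, and then compare the two integrands with Lemma~\ref{gtsdpilemma}. Write $f=f_Y$ and $M=\|f_Y\|_\infty$. First apply Lemma~\ref{Dchilemma} with $P$ the law of $Y$ and $Q=\mu$; here $Q^t$ has density $g^t=(t+f)/(1+t)$. Then apply it again with $P$ the law of $Y+U_S$ and $Q=\mu$. In the second case $Q^t$ has density $(t+f\ast g_S)/(1+t)=g^t\ast g_S$, because $1\ast g_S=1$. Both error terms $\int\log\bigl(\frac{dP/dQ+T}{1+T}\bigr)\,dP$ tend to $0$ as $T\to\infty$. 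This follows from dominated convergence, since the densities are bounded by $M$: $f\ast g_S\le M$ as well, so all quantities are finite. Therefore
\[
D(Y\|U_G)-D(Y+U_S\|U_G)=\int_0^\infty\Bigl[\chi^2(f\|g^t)-\chi^2(f\ast g_S\|g^t\ast g_S)\Bigr]\frac{dt}{t(1+t)}.
\]

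Next I bound the bracket from below, splitting the $t$-range at $t_0:=\frac{2\eta_S M}{1-\eta_S}$. For $t<t_0$ I only use the data processing inequality for $\chi^2$ under convolution with $g_S$, which shows that the bracket is nonnegative. For $t\ge t_0$, Lemma~\ref{gtsdpilemma} gives
\[
\chi^2(f\ast g_S\|g^t\ast g_S)\le \eta_S\frac{t+M}{t}\chi^2(f\|g^t)\le\frac{1+\eta_S}{2}\chi^2(f\|g^t).
\]
So on this range the bracket is at least $\frac{1-\eta_S}{2}\chi^2(f\|g^t)$. Combining the two ranges,
\[
D(Y\|U_G)-D(Y+U_S\|U_G)\ge\frac{1-\eta_S}{2}\int_{t_0}^\infty \chi^2(f\|g^t)\frac{dt}{t(1+t)}.
\]

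It remains to show that this truncated integral captures at least a $\frac{1}{1+t_0}$ fraction of the full integral $\int_0^\infty\chi^2(f\|g^t)\frac{dt}{t(1+t)}=D(Y\|U_G)$. If so, the constant becomes
\[
\frac{1-\eta_S}{2(1+t_0)}=\frac{(1-\eta_S)^2}{2(1-\eta_S)+4\eta_S M}\ge\frac{(1-\eta_S)^2}{2+4\eta_S M},
\]
which is exactly the claimed bound. Using the identity $\chi^2(f\|g^t)=\frac{t^2}{1+t}\int(f-1)^2/(t+f)$ from \eqref{chi2M} and Fubini, this reduces to a pointwise inequality for each value $x=f(y)\in[0,M]$. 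With the weight $w_x(t)=\frac{t}{(1+t)^2(t+x)}$, the inequality is
\[
\int_{t_0}^\infty w_x(t)\,dt\ge\frac{1}{1+t_0}\int_0^\infty w_x(t)\,dt,
\]
and it is then integrated against $(x-1)^2$.

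This tail comparison is the step I expect to be the main obstacle. My first attempt will be the substitution $t=t_0+(1+t_0)u$, which turns $(1+t)^2$ into $(1+t_0)^2(1+u)^2$ and gives $t\ge(1+t_0)u$. This works cleanly when $x\ge1$. When $x<1$ the factor $t+x$ is not controlled by $(1+t_0)(u+x)$, so I would treat that region separately. One option is to compute $\int w_x$ explicitly, since it equals $\frac{x-1-\log x}{(x-1)^2}$ by \eqref{eq:integral}-type calculus. Another is to use the slack between $2(1-\eta_S)+4\eta_S M$ and $2+4\eta_S M$ to absorb any loss. If the pointwise comparison fails for small $x$, the fallback is to modify the split: choose a slightly larger threshold, or use Lemma~\ref{fourieruniformLemma} directly on $\|f\ast g_S-1\|_2^2$, to recover the stated constant.
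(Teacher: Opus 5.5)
Your architecture is the paper's. You use the same representation of the entropy difference as $\int_0^\infty\bigl[\chi^2(f\|g^t)-\chi^2(f\ast g_S\|g^t\ast g_S)\bigr]\frac{dt}{t(1+t)}$ via Lemma~\ref{Dchilemma}, and your argument that the error terms vanish is fine. You get nonnegativity on $[0,t_0)$ by data processing, apply Lemma~\ref{gtsdpilemma} on $[t_0,\infty)$ with the paper's threshold $t_0=2\eta_S M/(1-\eta_S)$, and do the same final bookkeeping.

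The gap is the step you flag yourself. You never prove
\[
\int_{t_0}^\infty\chi^2(f\|g^t)\,\frac{dt}{t(1+t)}\;\ge\;\frac{1}{1+t_0}\,D(Y\|U_G),
\]
and this is the only place where the truncated integral is tied back to $D(Y\|U_G)$. Your substitution $t=t_0+(1+t_0)u$ covers only $x\ge1$. The fallbacks (using the slack, enlarging the threshold, invoking Lemma~\ref{fourieruniformLemma}) are not arguments. The closed form you propose is also wrong: $\int_0^\infty w_x(t)\,dt=\frac{x\log x-x+1}{(x-1)^2}$, not $\frac{x-1-\log x}{(x-1)^2}$. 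For example, at $x=0$ we have $w_0(t)=(1+t)^{-2}$, which integrates to $1$, whereas your formula gives $+\infty$.

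The missing idea, which is what the paper does, is to apply Lemma~\ref{Dchilemma} a second time, now with $T=t_0$, to evaluate the tail exactly:
\[
\int_{t_0}^\infty\chi^2(f\|g^t)\,\frac{dt}{t(1+t)}=D(Y\|U_G)-\int_0^{t_0}\chi^2(f\|g^t)\,\frac{dt}{t(1+t)}=\int_G f\log\Bigl(\frac{f+t_0}{1+t_0}\Bigr)\,d\mu .
\]
Concavity of the logarithm, $\log\bigl(\frac{1}{1+t_0}f+\frac{t_0}{1+t_0}\cdot 1\bigr)\ge\frac{1}{1+t_0}\log f$, then gives the lower bound $\frac{1}{1+t_0}\int_G f\log f\,d\mu=\frac{1}{1+t_0}D(Y\|U_G)$.

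Your pointwise route also works, with no case split at $x=1$. Write
\[
(x-1)^2w_x(t)=\frac{x(x-1)}{(x+t)(1+t)}-\frac{x-1}{(1+t)^2}.
\]
This gives $\int_T^\infty(x-1)^2w_x(t)\,dt=x\log\frac{x+T}{1+T}-\frac{x-1}{1+T}$ for every $T\ge0$, with the convention $0\log 0=0$. Hence the pointwise inequality you need at $T=t_0$ reduces to
\[
x\log\frac{x+t_0}{1+t_0}\ge\frac{x\log x}{1+t_0},
\]
which is the same concavity and holds for all $x\ge0$. So your worry about small $x$ is unfounded. The substitution simply loses too much when $x<1$, even though the inequality itself holds there.
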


\noindent
{\sc Proof.}
Let $T>0$ be a large constant to be chosen later. As in 
Lemma~\ref{Dchilemma}, for each $t>0$ we 
write $g^t(y) = \frac{t+f_Y(y)}{1+t}, y \in G.$ Note that $Y+U_S$ has 
density $f_Y\ast g_S$ and 
    $$
    g^t\ast g_S = \frac{t+f_Y\ast g_S}{1+t}.
    $$
    Thus, using the expression obtained from Lemma~\ref{Dchilemma} and noting that $U_{G}+U_{S}$ has density $\mathbb{I}_{G} \equiv 1$ on $G$, we have 
\begin{align*}
    D(Y\|U_{G}) - D(Y+U_S\|U_{G}) &=  D(Y\|U_{G}) - D(Y+U_S\|U_{G}+U_S)\\
   &= \int_0^\infty \chi^2(f_Y\|g^t) - \chi^2(f_Y\ast g_S\|g^t\ast g_S)
\,\frac{dt}{t(1+t)}.
\end{align*}
By the data processing property of $\chi^2$ divergence viewed as
an $f$-divergence~\cite{csiszar:72,liese-vajda:book}, the integrand 
is nonnegative and hence
\begin{align*}
    D(Y\|U_{G}) - D(Y+U_S\|U_{G}) \geq \int_T^\infty \chi^2(f_Y\|g^t) - \chi^2(f_Y\ast g_S\|g^t\ast g_S)\,\frac{dt}{t(1+t)}.
\end{align*}
Applying 
Lemma~\ref{gtsdpilemma}, we obtain 
\begin{align*}
    D(Y\|U_{G}) - D(Y+U_S\|U_{G}) &\geq \Bigl(1-\eta_S\frac{T+\|f_Y\|_{\infty}}{T}\Bigr)\int_{T}^{\infty}\chi^2(f_Y\|g^t)\,\frac{dt}{t(1+t)} \\
    &= \Bigl(1-\eta_S\frac{T+\|f_Y\|_{\infty}}{T}\Bigr)\Bigl(D(Y\|U_{G})-\int_{0}^{T}\chi^2(f_Y\|g^t)\,\frac{dt}{t(1+t)}\Bigr).
\end{align*}
Then by Lemma~\ref{Dchilemma} we have, 
$$D(Y\|U_{G})-\int_{0}^{T}\chi^2(f_Y\|g^t)\,\frac{dt}{t(1+t)} 
= \int_{G}f_Y(y)\log{\Bigl(\frac{f_Y(y)+T}{1+T}\Bigr)} \, dy,$$
and therefore,
\begin{align*}
D(Y\|U_{G}) - D(Y+U_S\|U_{G}) &\geq \Bigl(1-\eta_S\frac{T+\|f_Y\|_{\infty}}{T}\Bigr)\int_{G}f_Y(y)\log{\Bigl(\frac{f_Y(y)+T}{1+T}\Bigr)} \, dy \\
&\geq \frac{1}{1+T}\Bigl(1-\eta_S\frac{T+\|f_Y\|_{\infty}}{T}\Bigr)\int_{G}{f_Y(y)\log{f_Y(y)} \, dy},
\end{align*}
where we used the concavity of the logarithm in the last inequality,
and we assumed $T$ is large enough so that the coefficient is positive. 
But now, 
$$\int_{G}f_Y(y)\log{{f_Y(y)} \, dy} = -h(Y) = D(Y\|U_{G}),$$ 
and choosing $T = 2\eta_S\frac{\|f_Y\|_{\infty}}{1-\eta_S}$ (so 
that the above positivity condition is also satisfied) gives,
\begin{align*}
    D(Y\|U_{G}) - D(Y+U_S\|U_{G}) &\geq \frac{1-\eta_S}{2(1+2\eta_S\frac{\|f_Y\|_{\infty}}{1-\eta_S})}D(Y\|U_{G}) \\
    &= \frac{(1-\eta_S)^2}{2(1-\eta_S+2\eta_S\|f_Y\|_{\infty})}D(Y\|U_{G})  \\
    &\geq \frac{(1-\eta_S)^2}{2+4\eta_S\|f_Y\|_{\infty}}D(Y\|U_{G}),
\end{align*}
where in the last inequality we used that $\eta_S\geq0$.
This is exactly the claimed bound.
\qed       

\section*{Statement on the use of AI}
ChatGPT 5.4 was used in Section \ref{SDPIsection} as an auxiliary tool. In particular, it suggested bounding \eqref{repeatedUS} via the SDPI
for $\chi^2$, and produced an argument for proving Lemma \ref{nonzerocharlemma}. The corresponding proofs appearing in the paper were written by the authors. 

\bibliographystyle{plain}
\bibliography{ik}

\end{document}

%% file: paper_defns.tex
\newcommand{\RL}{{\mathbb R}}

\newcommand{\IN}{{\mathbb Z}}

\newcommand{\BBP}{{\mathbb P}}

\def\ba{\begin{align}}
\def\ea{\end{align}}
\def\ban{\begin{align*}}
\def\ean{\end{align*}}

\def\be{\begin{eqnarray}}
\def\ee{\end{eqnarray}}
\def\ben{\begin{eqnarray*}}
\def\een{\end{eqnarray*}}

\def\bqq{\begin{equation}}
\def\eqq{\end{equation}}
\def\bqqn{\begin{equation*}}
\def\eqqn{\end{equation*}}

\def\sq{$\Box$}

\def\qed{\ifmmode\sq\else{\unskip\nobreak\hfil
\penalty50\hskip1em\null\nobreak\hfil\sq
\parfillskip=0pt\finalhyphendemerits=0\endgraf}\fi\par\medbreak}

\newsavebox{\junk}
\savebox{\junk}[1.6mm]{\hbox{$|\!|\!|$}}

\def\limsup{\mathop{\rm lim\ sup}}
\def\liminf{\mathop{\rm lim\ inf}}

\newcommand{\field}[1]{\mathbb{#1}}

\def\Re{\field{R}}

\def\Co{\field{C}}

\def\til={{\widetilde =}}

\def\clG{{\cal G}}

 \def\eq#1/{(\ref{#1})}

\newtheorem{theorem}{Theorem}[section]
\newtheorem{corollary}[theorem]{Corollary}
\newtheorem{proposition}[theorem]{Proposition}
\newtheorem{lemma}[theorem]{Lemma}
\newtheorem{definition}[theorem]{Definition}

\def\eq#1/{(\ref{e:#1})}

\def\bdes{\begin{description}}
\def\edes{\end{description}}

\def\notes#1{}

\definecolor{mag}{rgb}{0.7,0,0.3}
\definecolor{dgreen}{rgb}{0.1,0.5,0.1}
\definecolor{dred}{rgb}{.8,0,0}
\definecolor{gray}{rgb}{.8,.8,.8}
\definecolor{brown}{rgb}{0.6451,0.3706,0.1745}

%% file: ik.bib
@article{KM:14,
  title={Sumset and inverse sumset inequalities for differential entropy
        and mutual information},
  author={Kontoyiannis, I. and Madiman, M.},
  journal={IEEE Trans. Inform. Theory},
  volume={60},
  number={8},
  pages={4503-4514},
  month={August},
  year={2014},
}

@article{KM:16,
  title={Entropy bounds on abelian groups and the {Ruzsa} divergence},
  author={Madiman, M. and Kontoyiannis, I.},
  journal={IEEE Trans. Inform. Theory},
  volume={64},
  number={1},
  pages={77-92},
  year={2018},
  month={January}
}

@Article{EPI:25arxiv,
  author =      "Gavalakis, L. and Kontoyiannis, I.",
  title={Conditions for equality and stability in {Shannon's and Tao's}
	entropy power inequalities},
  journal = {arXiv e-prints},
  volume = {\texttt{2509.14021 [math.PR]}},
  year =        "2025",
  month =       "September",
  pages =       "",
}

@Article{gavalakis-goh:arxiv,
  author =      "Gavalakis, L. and Goh, M.K. and Kontoyiannis, I.",
  title={Entropy lower bounds and sum-product phenomenay},
  journal = {arXiv e-prints},
  volume = {\texttt{2604.20233 [math.CO]}},
  year =        "2026",
  month =       "April",
  pages =       "",
}

@book{diaconis:book,
  title={Group representations in probability and statistics},
  author={Diaconis, P.},
  year={1988},
  publisher={Institute of Mathematical Statistics},
  address={Clevenald, OH},
}

@article{harremoes:09b,
  title={Maximum entropy on compact groups},
  author={Harremo{\"e}s, P.},
  journal={Entropy},
  volume={11},
  number={2},
  pages={222-237},
  year={2009},
}

@book{hewitt:bookI,
  title={Abstract harmonic analysis: Volume I: Structure of 
	topological groups, integration theory, group representations},
  author={Hewitt, E. and Ross, K.A.},
  year={1979},
  publisher={Springer},
  edition={Second},
  address={Berlin, Germany},
}

@book{hewitt:bookII,
  title={Abstract harmonic analysis: Volume II: Structure and analysis for 
	compact groups, analysis on locally compact abelian groups},
  author={Hewitt, E. and Ross, K.A},
  year={1970},
  publisher={Springer},
  address={Berlin, Germany},
}

@book{srivastava:book,
  title={A course on Borel sets},
  author={Srivastava, S.M.},
  year={1998},
  publisher={Springer},
  address={New York, NY}
}

@Book{pinsker:book,
  author =      "Pinsker, M.S.",
  title =       "Information and information stability
		 of random variables and processes",
  publisher =   "Holden-Day",
  year =        "1964",
  address =     "San Fransisco",
  OPTsummary =  ""
}

@book{liese-vajda:book,
  title={Convex statistical distances},
  author={Liese, F. and Vajda, I.},
  series={Teubner-Texte zur Mathematik},
  publisher={BSB BG Teubner Verlagsgesellschaft},
  volume={95},
  year={1987},
  address={Leipzig},
}

@article{KV:83,
     title = {Random Walks on Discrete Groups: {B}oundary and Entropy},
     author = {Kaimanovich, V.A. and Vershik, A.M.},
     journal = {Ann. Probab.},
     volume = {11},
     number = {3},
     pages = {457-490},
     year = {1983},
	MONTH = {August},
    }

@article{shamai:90,
  title={A binary analog to the entropy-power inequality},
  author={Shamai, S. and Wyner, A.D.},
	journal = {IEEE Trans. Inform. Theory},
  volume={36},
  number={6},
  pages={1428-1430},
  year={1990},
  month={November},
}

@article{eldan:20,
	author = {Eldan, R. and Mikulincer, D.},
	journal = {Probab. Theory Related Fields},
	number = {3-4},
	pages = {891-922},
	title = {Stability of the {S}hannon-{S}tam inequality via 
		the {F}\"{o}llmer process},
	volume = {177},
	year = {2020},
}

@article{tao:18,
  title={An inverse theorem for an inequality of {K}neser},
  author={Tao, T.},
  journal={Proc. Steklov Inst. Math.},
  volume={303},
  number={1},
  pages={193-219},
  year={2018},
  month={March},
}

@article{christ:22,
  title={Inequalities of {Riesz-Sobolev} type for compact connected 
	abelian groups},
  author={Christ, M. and Iliopoulou, M.},
  journal={Am. J. Math.},
  volume={144},
  number={5},
  pages={1367-1435},
  year={2022},
  month={October},
}

@article{major:79,
  title={A local limit theorem for the convolution of probability 
	measures on a compact connected group},
  author={Major, P. and Shlosman, S.B.},
  journal="Z. Wahrsch. Verw. Gebiete",
  volume={50},
  number={2},
  pages={137-148},
  year={1979},
}

@article{choi:94,
  title={Equivalence of certain entropy contraction coefficients},
  author={Choi, M.D. and Ruskai, M.B. and Seneta, E.},
  journal={Linear Algebra Appl.},
  volume={208},
  pages={29-36},
  year={1994},
}

@article{green:25,
  title={Sumsets and entropy revisited},
  author={Green, B. and Manners, F. and Tao, T.},
  journal={Random Struct. Algorithms},
  volume={66},
  number={1},
  pages={e21252},
  year={2025},
}

@article{blachar:arxiv,
  title={Small entropy doubling for random walks and polynomial growth},
  author={Blachar, G.},
  journal = {arXiv e-prints},
  volume = {\texttt{2604.00490 [math.GR]}},
  month = {April},
  year={2026},
}

@Article{tao:10,
  author =      "Tao, T.",
  title =       "Sumset and inverse sumset theory for {Shannon} entropy",
  journal =     "Comb. Probab. Comput.",
  year =        "2010",
  volume =      "19",
  number =      "4",
  pages =       "603-639",
  month =       "July",
  OPTnote =     ""
}

@article{wyner:73II,
  title={A theorem on the entropy of certain binary sequences 
	and applications: {Part II}},
  author={Wyner, A.D.},
  JOURNAL = {IEEE Trans. Inform. Theory},
  volume={19},
  number={6},
  pages={772-777},
  year={1973},
  month={November}
}

@article{wyner:73I,
  title={A theorem on the entropy of certain binary sequences 
	and applications: {Part I}},
  author={Wyner, A.D. and Ziv, J.},
  JOURNAL = {IEEE Trans. Inform. Theory},
  volume={19},
  number={6},
  pages={769-772},
  year={1973},
  month={November}
}

@article{csiszar:72,
  title={A class of measures of informativity of observation channels},
  author={Csisz{\'a}r, I.},
  journal={Period. Math. Hung.},
  volume={2},
  number={1-4},
  pages={191-213},
  year={1972},
}

@preamble{
   "\def\cprime{$'$} "
}

@article {donsker-varadhan:I-II,
    AUTHOR = {Donsker, M.D. and Varadhan, S.R.S.},
     TITLE = {Asymptotic evaluation of certain {M}arkov process expectations
              for large time. {I}. {I}{I}},
   JOURNAL = {Comm. Pure Appl. Math.},
    VOLUME = {28},
      YEAR = {1975},
     PAGES = {1-47; ibid. 28:279-301},
}

@article{varju:13,
  title={Random walks in compact groups},
  author={Varj{\'u}, P.P.},
  journal={Doc. Math.},
  volume={18},
  pages={1137-1175},
  year={2013}
}

@article{kneser:56,
  title={Summenmengen in lokalkompakten abelschen {Gruppen}},
  author={Kneser, M.},
  journal={Math. Z.},
  volume={66},
  number={1},
  pages={88-110},
  year={1956},
}

@article{rosenthal:94,
  title={Random rotations: {Characters} and random walks on {SO(N)}},
  author={Rosenthal, J.S.},
  journal =     "Ann. Probab.",
  pages={398-423},
  year={1994},
  month={January}
}

@article{ball:12,
author = {Ball, K. and Nguyen, V.H.},
journal = {Studia Mathematica},
number = {1},
pages = {81-96},
title = {Entropy jumps for isotropic log-concave random vectors and spectral gap},
volume = {213},
year = {2012},
}

@article{ball:03,
author = {Ball, K. and Barthe, F. and Naor, A.},
title = {{Entropy jumps in the presence of a spectral gap}},
volume = {119},
journal = {Duke Math. J.},
number = {1},
pages = {41-63},
year = {2003},
month={July}
}

@article {blachman:65,
    AUTHOR = {Blachman, N.M.},
     TITLE = {The convolution inequality for entropy powers},
   JOURNAL = {IEEE Trans. Inform. Theory},
    VOLUME = {11},
    NUMBER = {2},
     MONTH = {April},
      YEAR = {1965},
     PAGES = {267-271},
}

@article {stam:59,
    AUTHOR = {Stam, A.J.},
     TITLE = {Some inequalities satisfied by the quantities of information
              of {F}isher and {S}hannon},
   JOURNAL = {Inf. Contr.},
    VOLUME = {2},
      YEAR = {1959},
     PAGES = {101-112},
	NUMBER = {2},
}

@article {johnson:00b,
  title={Entropy and convergence on compact groups},
  author={Johnson, O. and Suhov, Yu.M.},
  journal =     {J. Theoret. Probab.},
  volume={13},
  number={3},
  pages={843-857},
  year={2000},
}

@InProceedings{polyanskiy-wu:17,
author="Polyanskiy, Y. and Wu, Y.",
editor="Carlen, E. and Madiman, M.  and Werner, E.M.",
title="Strong Data-Processing Inequalities for Channels 
	and {Bayesian} Networks",
booktitle="Convexity and Concentration",
year="2017",
publisher="Springer New York",
address="New York, NY",
pages="211-249",
}

@article{jog:14,
  title={The Entropy Power Inequality and {Mrs. Gerber's Lemma} for 
	Groups of Order {$2^n$}},
  author={Jog, V. and Anantharam, V.},
  journal =     "IEEE Trans. Inform. Theory",
  volume={60},
  number={7},
  pages={3773-3786},
  year={2014},
  month={July},
}

@article{madiman:21b,
  title={Entropy inequalities for sums in prime cyclic groups},
  author={Madiman, M. and Wang, L. and Woo, J.O.},
  journal={SIAM J.Discrete Math.},
  volume={35},
  number={3},
  pages={1628-1649},
  year={2021},
}

@article{abbe:14,
  title={A new entropy power inequality for integer-valued random variables},
  author={Haghighatshoar, S. and Abbe, E. and Telatar, I.E.},
  JOURNAL = "IEEE Trans. Inform. Theory",
  volume={60},
  number={7},
  pages={3787-3796},
  year={2014},
  month={July},
}

@article{courtade:18,
	author = {Courtade, T.A. and Fathi, M. and Pananjady, A.},
	journal = {IEEE Trans. Inform. Theory},
	number = {8},
	pages = {5691-5703},
	title = {Quantitative stability of the entropy power inequality},
	volume = {64},
	year = {2018},
	month = {August},
}

@article{kuratowski:65,
  title={A general theorem on selectors},
  author={Kuratowski, K. and Ryll-Nardzewski, C.},
  journal={Bull. Acad. Polon. Sci. S{\'e}r. Sci. Math. Astronom. Phys.},
  volume={13},
  number={6},
  pages={397-403},
  year={1965}
}

@Article{shannon:48,
	author= "Shannon, C.E.",
	title=  "A mathematical theory of communication",
	journal="Bell System Tech. J.",
	volume= "27",
   	number= "3",
	pages=  "379-423, 623-656",
	year=   "1948",
	OPTsummary=""
	}
